\documentclass[leqno]{amsart}
\usepackage{amsthm}
\usepackage{url}
\usepackage{tikz}

\numberwithin{equation}{section}
\numberwithin{figure}{section}

\theoremstyle{plain}
 \newtheorem{thm}{Theorem}
 \newtheorem{lem}[thm]{Lemma}

\theoremstyle{definition}
 
\theoremstyle{remark}

\newcommand{\Inn}{\mathrm{Inn}}
\newcommand{\rdiv}{/}
\newcommand{\ldiv}{\backslash}
\newcommand{\reltheta}{\mathrel{\theta}}

\title{Nonassociative right hoops}

\author[P. Jipsen]{Peter Jipsen}
\email{jipsen@chapman.edu}
\address{Faculty of Mathematics, School of Computational Sciences, Chapman University, 545 W. Palm Ave, Orange, CA 92866 USA}

\author[M. Kinyon]{Michael Kinyon$^\dagger$}
\thanks{${}^\dagger$ Partially supported by Simons Foundation Collaboration Grant 359872}
\email{mkinyon@du.edu}
\address{Department of Mathematics, University of Denver, Denver, CO 80208 USA}

\keywords{right residuated magmas, nonassociative hoops, right quasigroups}

\subjclass[2010]
{Primary:
06F05; 
Secondary:
08B15, 
20N05} 

\begin{document}

\begin{abstract}
The class of nonassociative right hoops, or narhoops for short, is defined
as a subclass of right-residuated magmas, and is shown to be a variety.
These algebras generalize both right quasigroups and right hoops,
and we characterize the subvarieties in which the operation $x\sqcap y=(x\rdiv y)y$
is associative and/or commutative. Narhoops with a left unit are proved to have a top element if and only if $\sqcap$ is commutative, and their congruences are
determined by the equivalence class of the left unit. We also show that the
four identities defining narhoops are independent.
\end{abstract}

\maketitle

\section{Introduction}

A \emph{residuated magma} is a partially ordered algebra $(A,\leq,\cdot,\rdiv,\ldiv)$
such that $(A,\leq)$ is a poset, $\cdot$ is a binary operation and $\rdiv,\ldiv$ are the right and left residuals of $\cdot$, which means the residuation property
\[
x\cdot y\leq z\quad\iff\quad x\leq z\rdiv y\quad\iff\quad y\leq x\ldiv z
\]
holds for all $x,y,z\in A$. As usual, we abbreviate $x\cdot y$ by $xy$
and adopt the convention that $\cdot$ binds more strongly than $\rdiv,\ldiv$.
A \emph{right-residuated magma} is of the form $(A,\le,\cdot,\rdiv)$
such that $(A,\le)$ is a poset and only the first equivalence is assumed, i.e.,
\[
\tag{Rres} xy\leq z\quad\iff\quad x\leq z\rdiv y\,.
\]
It is well-known that a partially ordered algebra $(A,\cdot,\rdiv)$ is a right-residuated magma if and only if, for all $x,y,z\in A$, the following hold:
\begin{align*}
  (x\rdiv y)y   &\leq       x\leq xy\rdiv y\,,       \\ 
  x\leq y       &\implies   xz\leq yz\,,             \\ 
  x\leq y       &\implies   x\rdiv z\leq y\rdiv z\,.  
\end{align*}

Define the term $x\sqcap y=(x\rdiv y)y$ and consider the following two varieties:
\begin{itemize}
\item A \emph{right quasigroup} is an algebra $(A,\cdot,\rdiv)$ satisfying the identities
$x\sqcap y = x = (xy)\rdiv y$. Right quasigroups are precisely those right-residuated magmas for which
the partial order $\leq$ is the equality relation.

\item A \emph{right hoop} is an algebra
$(A,\cdot,\rdiv)$ satisfying the identities $x\sqcap y=y\sqcap x$, $(x\rdiv x)y = y$,
and $x\rdiv (yz) = (x\rdiv z)\rdiv y$. Then it turns out that $x\rdiv x$ is a constant (denoted by $1$), the operation $\cdot$ is associative, the operation $\sqcap$ is a semilattice
operation, $1$ is the top element with respect to the semilattice order $\leq$, and $\rdiv$ is the right residual of $\cdot$ with respect to $\leq$.
\end{itemize}

Right hoops were introduced by Bosbach \cite{Bosbach1969,Bosbach1970} under the name ``left complementary semigroups''. He defined right complementary semigroups $(A,\cdot,\ldiv)$ dually and he called an algebra $(A,\cdot,\ldiv,\rdiv)$ a complementary semigroup if $(A,\cdot,\rdiv)$ is left complementary, $(A,\cdot,\ldiv)$ is right complementary, and the identity $(x\rdiv y)y = y(y\ldiv x)$ holds. Later, B\"{u}chi and Owens \cite{BuchiOwens} studied the case where $\cdot$ is commutative and $x\rdiv y = y\ldiv x$ holds, referring to these structures as ``hoops''. In the case where $\cdot$ is not necessarily commutative, Bosbach's complementary semigroups have been called ``pseudohoops'' (e.g., \cite{BDK}) or ``generalized hoops'' \cite{JipsenMontagna}. Bosbach \cite{Bosbach2009,Bosbach2010} later adapted the term ``hoop'' itself to the noncommutative case, suggesting that the case of B\"{u}chi and Owens be explicitly referred to as commutative or abelian.

Returning to the one-sided case, Rump \cite{Rump2008} renamed Bosbach's left complementary semigroups as ``left hoops'' (and used the opposite operation of our $\rdiv$). Bosbach followed suit in \cite{Bosbach2009,Bosbach2010}. In \cite{JipsenMontagna}, they were called ``right generalized hoops''. Here we follow Rump and Bosbach, but we reverse the left-right convention since $\rdiv$ is the \emph{right} residual of $\cdot$.

Note that the partial order is definable in both right quasigroups and right hoops, which motivates the following definition. A \emph{nonassociative right hoop} $(A,\leq,\cdot,\rdiv)$, or \emph{narhoop} for short, is a right-residuated magma such that for all $x,y\in A$
\[
\tag{N'} x\leq y   \quad\iff\quad  x\sqcap y = x = y\sqcap x\,.
\]
In any right-residuated magma $(x\rdiv y)y\le x$ or equivalently $x\sqcap y\leq x$ holds for all $x,y$, hence in a narhoop (N') implies that the identity $(x\sqcap y)\sqcap x=x\sqcap y$ holds. This provides an alternative definition for narhoops: they are right-residuated magmas that satisfy the identity
\[
\tag{N1} (x\sqcap y)\sqcap x=x\sqcap y
\]
and the bi-implication
\[
\tag{N} x\leq y \quad\iff\quad x = y\sqcap x
\]
since in the presence of (N1), if $x=y\sqcap x$ then multiplying by $y$ on the right we have $x\sqcap y=(y\sqcap x)\sqcap y=y\sqcap x=x$.

A \emph{nonassociative left hoop} or \emph{nalhoop} $(A,\leq,\cdot,\ldiv)$ is defined
dually and a \emph{nonassociative hoop} or \emph{nahoop} is both a narhoop and a nalhoop. Here we consider only narhoops and save the two-sided case for future research.

The two motivating varieties fit into this framework as follows.
\begin{itemize}
  \item A narhoop $(A,\cdot,\rdiv)$ is a right quasigroup if and only if $\leq$ is the equality relation.
  \item A narhoop $(A,\cdot,\rdiv)$ is a right hoop if and only if the identity
  $x/yz$ = $(x/z)/y$ and the
  quasiequation $x\sqcap y=x \implies x\leq y$ hold.
\end{itemize}

The main result of this section is that narhoops form a finitely based variety of algebras. To reduce the need for parentheses,
we assume that $x\rdiv y$ binds stronger than $x\sqcap y=(x\rdiv y)y$.

\begin{thm}\label{Thm:variety}
Let $(A,\leq,\cdot,\rdiv)$ be a narhoop. Then the following identities hold:
\begin{align*}
\tag{N1}  (x\sqcap y)\sqcap x &= x\sqcap y\,, \\
\tag{N2}  x &\leq xy\rdiv y\,, \\
\tag{N3}  (x\sqcap y)z &\leq xz\,, \\
\tag{N4} (x\sqcap y)\rdiv z &\leq x\rdiv z\,.
\end{align*}

Conversely, let $(A,\cdot,\rdiv)$ be an algebra with two binary operations satisfying \textup{(N1)--(N4)}, where
the relation $\leq$ is defined by \textup{(N)}. Then the identities
\begin{align*}
\tag{N5} x\sqcap xy/y &= x\,, \\
\tag{N6} (x\sqcap y)\rdiv y &= x\rdiv y\,, \\
\tag{N7} (x\sqcap y)\sqcap y &= x\sqcap y
\end{align*}
hold and $(A,\leq,\cdot,\rdiv)$ is a narhoop.
\end{thm}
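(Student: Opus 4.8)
The plan is to treat the converse as a carefully ordered sequence of deductions, tracking for each step which of (N1)--(N4) it uses (with $\leq$ read through (N)), so as to avoid circularity. First I would extract the two monotonicity laws that come essentially for free: if $u\leq v$, that is $u=v\sqcap u$, then substituting into (N3) and (N4) gives $uz=(v\sqcap u)z\leq vz$ and $u\rdiv z=(v\sqcap u)\rdiv z\leq v\rdiv z$, so $\cdot$ and $\rdiv$ are isotone in their first argument with respect to $\leq$. Next I would record antisymmetry directly from (N1): if $x=y\sqcap x$ and $y=x\sqcap y$, then $x=y\sqcap x=(x\sqcap y)\sqcap x=x\sqcap y=y$, the middle equality being (N1).

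With antisymmetry available I would derive (N5), (N6), (N7) in that order. For (N5), note (N2) says $x=(xy\rdiv y)\sqcap x$; writing $b=xy\rdiv y$ and substituting this for $x$ in the first argument of $x\sqcap b$ gives $x\sqcap b=(b\sqcap x)\sqcap b=b\sqcap x=x$ by (N1). For (N6) I would sandwich: (N4) with $z=y$ gives $(x\sqcap y)\rdiv y\leq x\rdiv y$, while (N2) applied to $x\rdiv y$ in place of $x$ gives $x\rdiv y\leq((x\rdiv y)y)\rdiv y=(x\sqcap y)\rdiv y$, so antisymmetry yields (N6); multiplying (N6) on the right by $y$ gives (N7). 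Reflexivity is then immediate: with $b=xy\rdiv y$ and $x=b\sqcap x$ from (N2), we get $x\sqcap x=(b\sqcap x)\sqcap x=b\sqcap x=x$, now using (N7).

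The crux is the first residuation condition $x\sqcap y\leq x$, equivalently $x\sqcap(x\sqcap y)=x\sqcap y$. The difficulty is that (N1) supplies only the opposite-handed identity $(x\sqcap y)\sqcap x=x\sqcap y$, and since (N) is genuinely one-sided no direct substitution closes the gap. The trick I expect to use is to first prove the auxiliary inequality
\[
(x\sqcap y)\sqcap z\leq x\sqcap z
\]
by multiplying (N4) on the right by $z$ (using first-argument isotonicity of $\cdot$) together with $((x\sqcap y)\rdiv z)z=(x\sqcap y)\sqcap z$ and $(x\rdiv z)z=x\sqcap z$. Specialising to $z=x$ and invoking (N1) on the left and reflexivity on the right gives $x\sqcap y=(x\sqcap y)\sqcap x\leq x\sqcap x=x$, which is exactly $x\sqcap y\leq x$. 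This is the one step I expect to be genuinely delicate; everything else is bookkeeping in the correct order.

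Once $x\sqcap y\leq x$ is in hand, transitivity follows without circularity: the auxiliary inequality (in the form $(z\sqcap y)\sqcap x\leq z\sqcap x$) shows $\sqcap$ is isotone in its first argument, so from $x\leq y$ and $y\leq z$ we get $x=y\sqcap x\leq z\sqcap x$, while $z\sqcap x\leq x$ is the condition just proved, and antisymmetry gives $x=z\sqcap x$, i.e. $x\leq z$. Finally I would verify (Rres). In the forward direction, $xy\leq z$ gives $(xy)\rdiv y\leq z\rdiv y$ by isotonicity of $\rdiv$, which combined with $x\leq(xy)\rdiv y$ from (N2) and transitivity yields $x\leq z\rdiv y$; conversely, $x\leq z\rdiv y$ gives $xy\leq(z\rdiv y)y=z\sqcap y\leq z$ by isotonicity of $\cdot$, the condition $z\sqcap y\leq z$, and transitivity. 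Reflexivity, antisymmetry and transitivity make $\leq$ a poset, (Rres) makes $(A,\leq,\cdot,\rdiv)$ a right-residuated magma, and (N) holds by the very definition of $\leq$, so the structure is a narhoop.
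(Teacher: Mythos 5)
Your derivations of first-argument isotonicity, antisymmetry, (N5)--(N7), reflexivity, and the key inequality $x\sqcap y\leq x$ are all correct and non-circular, and your route to that key inequality --- proving the auxiliary $(x\sqcap y)\sqcap z\leq x\sqcap z$ from (N4) plus isotonicity of $\cdot$, then setting $z=x$ --- is, once the isotonicity lemma is unpacked, exactly the paper's substitution into (N3); you have merely packaged the same computation more modularly. (Note also that you omit the forward half of the theorem entirely; it is essentially immediate --- (N1) holds by the definition of narhoop, (N2) holds in every right-residuated magma, and (N3), (N4) follow from $x\sqcap y\leq x$ and first-argument isotonicity --- but it is part of the statement.)

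However, your transitivity step contains a genuine error. From $x\leq y\leq z$ you correctly obtain $x=y\sqcap x\leq z\sqcap x$, but you then assert that ``$z\sqcap x\leq x$ is the condition just proved.'' It is not: what you proved is $u\sqcap v\leq u$, the meet below its \emph{first} argument, whose relevant instance is $z\sqcap x\leq z$. The inequality $u\sqcap v\leq v$ (meet below the \emph{second} argument) is strictly stronger: by Theorem~\ref{Thm:comm_meet} it is, together with (N1), equivalent to commutativity of $\sqcap$, and it fails in any nontrivial right quasigroup (where $z\sqcap x=z$ and $\leq$ is equality), even though right quasigroups satisfy (N1)--(N4). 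Under your hypotheses the claim $z\sqcap x\leq x$ amounts to the very conclusion $x\leq z$ you are trying to establish, so as written the step is unsupported. Fortunately the repair is one line, using only tools you already derived: $x\leq z\sqcap x$ means, by (N), that $x=(z\sqcap x)\sqcap x$, and (N7) gives $(z\sqcap x)\sqcap x=z\sqcap x$; hence $x=z\sqcap x$, i.e.\ $x\leq z$, with no appeal to antisymmetry at all. With this patch your proof is complete, and your ordering (key inequality before transitivity) makes transitivity considerably shorter than the paper's long direct computation; your closing verification of (Rres) from (N2), isotonicity, $z\sqcap y\leq z$ and transitivity is also correct.
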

\begin{proof}
Assume $(A,\leq,\cdot,\rdiv)$ is a narhoop. As noted above, the identity (N1) holds in narhoops. Every right-residuated magma satisfies (N2). In right-residuated magmas, both $\cdot$ and $\rdiv$ are order-preserving in their first argument. Since $x\sqcap y\leq x$, we get (N3) and (N4).

For the converse, suppose $(A,\cdot,\rdiv)$ satisfies (N1)--(N4), and $\leq$ is defined by (N).
From (N2), (N1) and (N2) again, we get (N5):
\[
x\sqcap (xy\rdiv y) = (xy\rdiv y\sqcap x)\sqcap (xy\rdiv y) = (xy\rdiv y)\sqcap x = x\,.
\]
For (N6), replace $x$ in (N5) by $x\rdiv y$ to get $x\rdiv y\sqcap(x\sqcap y)\rdiv y=x\rdiv y$ and then use (N4). To prove (N7) multiply (N6) on the right by $y$.

Now reflexivity of $\leq$ follows from (N5) and (N1): $x\sqcap x = (x\sqcap xy\rdiv y)\sqcap x = x\sqcap xy\rdiv y = x$.

For antisymmetry, if $x\leq y$ and $y\leq x$, then $x = y\sqcap x$ and $y = x\sqcap y$, hence $x = y\sqcap x = (y\sqcap x)\sqcap y = x\sqcap y = y$ using (N1) in the second equality.

Transitivity requires a bit more work. Suppose $x\leq y$ and $y\leq z$ so that
$x = y\sqcap x$ and $y = z\sqcap y$. First, note that
\[
z\rdiv x\sqcap y\rdiv x = z\rdiv x\sqcap (z\sqcap y)\rdiv x = (z\sqcap y)\rdiv x = y\rdiv x
\]
using (N4) and (N) in the second equality. Now we compute
\begin{align*}
  z\sqcap x    &= (z\sqcap x)\sqcap x &\text{by (N7)}\\
   &= (z\sqcap x)\sqcap (y\sqcap x) \\
   &= (z\rdiv x)x\sqcap (y\rdiv x)x \\
   &= (z\rdiv x)x\sqcap (z\rdiv x\sqcap y\rdiv x)x &\text{since $z\rdiv x\sqcap y\rdiv x = y\rdiv x$} \\
   &= (z\rdiv x \sqcap y\rdiv x)x &\text{by (N3) and (N)}\\
   &= (z\rdiv x \sqcap (z\sqcap y)\rdiv x)x \\
   &= ((z\sqcap y)\rdiv x)x &\text{by (N4) and (N)} \\
   &= (z\sqcap y)\sqcap x = y\sqcap x= x\,.
\end{align*}
From $x=z\sqcap x$ we get $x\leq z$ from (N).

Finally, we prove $\rdiv$ is the right residual of $\cdot$ with respect to $\leq$.
To do this, we verify $(x\rdiv y)y\leq x\leq xy\rdiv y$ and that $x\leq y$ implies $xz\leq yz$ and $x\rdiv z\leq y\rdiv z$ since the right residuation property is equivalent to these (quasi)identities.
Note that (N2) and (N) show $x\leq xy\rdiv y$. If $x\leq y$, then (N3) gives
\[
yz\sqcap xz = yz\sqcap (y\sqcap x)z = (y\sqcap x)z = xz,
\]
and so (N) implies $xz\leq yz$. By the same argument, (N4) gives $x\rdiv z\leq y\rdiv z$.

To prove $(x\rdiv y)y\leq x$, or equivalently $x\sqcap y\leq x$, substitute $x\rdiv x$ for $x$, $x$ for $y$,
and $(x\sqcap y)\rdiv x$ for $z$ in (N3) to get
\[
(x\rdiv x)x\sqcap(x\rdiv x\sqcap(x\sqcap y)\rdiv x)x=(x\rdiv x\sqcap(x\sqcap y)\rdiv x)x.
\]
Using (N4) this simplifies to $(x\sqcap x)\sqcap((x\sqcap y)\rdiv x)x=((x\sqcap y)\rdiv x)x$, so by
(N1), (N) and reflexivity we have $x\sqcap y\leq x\sqcap x=x$.
\end{proof}

The equational basis (N1)--(N4) for narhoops is independent as can be seen from  algebras $A_i=\{0,1\}$ ($i=1,2,3,4$) that each satisfy the axioms except for (N$i$).
\begin{itemize}
\item In $A_1$, $\cdot$ is ordinary multiplication and $x\rdiv y=y$.
\item In $A_2$, $x\cdot y=x$ and $x\rdiv y=1$.
\item In $A_3$, $x\cdot y$ is addition modulo 2 and $x\rdiv y = 0$ except that $1\rdiv 0 = 1$.
\item In $A_4$, $x\cdot y$ is the $\max$ operation and $x\rdiv y$ is addition modulo 2.
\end{itemize}

\section{Associativity and commutativity}
\label{Sec:Associativity}

In general, neither $\cdot$ nor the term operation $\sqcap$ of a narhoop is associative.
However $\sqcap$ is associative both in right quasigroups and in right hoops.
In right quasigroups, this follows from the identity $x\sqcap y = x$.
In right hoops, $\sqcap$ turns out be a semilattice operation (\cite{JipsenMontagna}, Lem. 4). In both cases the reduct $(A,\sqcap)$ is a \emph{left normal band}, that is, an idempotent semigroup satisfying the identity $x\sqcap y\sqcap z = x\sqcap z\sqcap y$.
In the absence of associativity we consider the following identities:
\begin{align*}
\tag{LN} (x\sqcap y)\sqcap z &= (x\sqcap z)\sqcap y \quad \text{the \emph{left normal identity}} \\
\tag{N8} x\sqcap(y\sqcap x) &=x\sqcap y  \\
\tag{N9} (x\sqcap(y\sqcap z))\sqcap z &=x\sqcap(y\sqcap z)\,.
\end{align*}

If $(A,\cdot,\rdiv)$ is a narhoop and $B\subseteq A$ is closed under $\sqcap$, then
$B$ inherits the order $\leq$ from $A$. We state the next two results in the slightly more general context of such subsets because we will need them in Theorem \ref{Thm:principalideals}.

\begin{thm}\label{Thm:lnb}
Let $(A,\cdot,\rdiv)$ be a right-residuated magma satisfying \textup{(N)} and let $B\subseteq A$ be closed under $\sqcap$. The following are equivalent.
\begin{enumerate}
  \item\quad $(B,\sqcap)$ is a left normal band;
  \item\quad $(B,\sqcap)$ is a semigroup;
  \item\quad $(B,\sqcap)$ satisfies \textup{(N8)} and \textup{(N9)}.
\end{enumerate}
\end{thm}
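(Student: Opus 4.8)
The plan is to prove the cycle of implications $(1)\Rightarrow(2)\Rightarrow(3)\Rightarrow(1)$. Throughout I will lean on facts that hold in every right-residuated magma satisfying \textup{(N)}, hence on $B$: the operation $\sqcap$ is idempotent (since $x\le x$ forces $x\sqcap x=x$ by \textup{(N)}), $x\sqcap y\le x$ holds, $\sqcap$ is order-preserving in its first argument (because $\cdot$ and $\rdiv$ are, so $x\le y$ gives $x\sqcap z=(x\rdiv z)z\le(y\rdiv z)z=y\sqcap z$, and consequently $(x\sqcap y)\sqcap z\le x\sqcap z$), and, crucially, \textup{(N)} in the form $u\le v\iff v\sqcap u=u$ lets me turn order facts into equations: whenever $u\le x$ I may replace $x\sqcap u$ by $u$.

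The implication $(1)\Rightarrow(2)$ is immediate, as a left normal band is by definition an idempotent semigroup. For $(2)\Rightarrow(3)$, assume $\sqcap$ is associative. Then \textup{(N9)} follows from associativity and idempotency, since $(x\sqcap(y\sqcap z))\sqcap z=x\sqcap(y\sqcap(z\sqcap z))=x\sqcap(y\sqcap z)$. For \textup{(N8)} I first get left regularity $(x\sqcap y)\sqcap x=x\sqcap y$ by antisymmetry: with $u=x\sqcap y$ and $v=u\sqcap x$ we have $v\le u$ since $u\sqcap x\le u$, while $v\sqcap y=u\sqcap(x\sqcap y)=u$ by associativity and idempotency, so $u=v\sqcap y\le v$ and thus $u=v$; then $x\sqcap(y\sqcap x)=(x\sqcap y)\sqcap x=x\sqcap y$ is \textup{(N8)}.

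For $(3)\Rightarrow(1)$ I first harvest the non-associative consequences of \textup{(N8)} and \textup{(N9)}. Substituting $z:=x$ in \textup{(N9)} and rewriting with \textup{(N8)} yields \textup{(N1)}: from $(x\sqcap(y\sqcap x))\sqcap x=x\sqcap(y\sqcap x)=x\sqcap y$ we read off $(x\sqcap y)\sqcap x=x\sqcap y$. From $x\sqcap y\le x$ and \textup{(N)} I also get $x\sqcap(x\sqcap y)=x\sqcap y$, and substituting $y:=x$ in \textup{(N9)} gives $(x\sqcap z)\sqcap z=x\sqcap z$. Granting associativity for the moment, the left normal identity \textup{(LN)} then comes cheaply: first-argument monotonicity gives $(x\sqcap y)\sqcap z\le x\sqcap z$, so by \textup{(N)} we have $(x\sqcap z)\sqcap\bigl((x\sqcap y)\sqcap z\bigr)=(x\sqcap y)\sqcap z$, and expanding the left side by associativity and left regularity collapses it to $(x\sqcap z)\sqcap y$, giving $(x\sqcap y)\sqcap z=(x\sqcap z)\sqcap y$. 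Hence idempotency, associativity, and \textup{(LN)} make $(B,\sqcap)$ a left normal band.

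The one genuinely hard step, and the heart of the theorem, is showing that \textup{(N8)} and \textup{(N9)} force $(x\sqcap y)\sqcap z=x\sqcap(y\sqcap z)$. I would attack this by antisymmetry. One half of the comparison is within reach: first-argument monotonicity gives $(x\sqcap y)\sqcap z\le x\sqcap z$, and via \textup{(N9)} also $x\sqcap(y\sqcap z)=\bigl(x\sqcap(y\sqcap z)\bigr)\sqcap z\le x\sqcap z$, so both products sit below $x\sqcap z$, with the equations $x\sqcap\bigl((x\sqcap y)\sqcap z\bigr)=(x\sqcap y)\sqcap z$ and its analogue for $x\sqcap(y\sqcap z)$ on hand. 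The obstacle is the matching inequality. Every available inequality is deflationary, of the shape ``product $\le$ first argument,'' so the naive attempt to prove $u\le v$ by verifying $v\sqcap u=u$ keeps collapsing, through $(p\sqcap q)\sqcap q=p\sqcap q$, back to the very statement being proved; and no blanket monotonicity lemma rescues this, since $\sqcap$ need not be order-preserving in its second argument. Breaking this circularity is the crux: I expect it to require a careful forward chain that plays \textup{(N8)}, \textup{(N9)}, and \textup{(N1)} against the order-equations $v\sqcap u=u$ for $u\le v$, rather than any single monotonicity step. Once associativity is secured, the cycle closes by the easy \textup{(LN)} computation above.
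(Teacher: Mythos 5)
Your handling of (1)$\implies$(2), of (2)$\implies$(3), and your derivation of (N1) from (N8) and (N9) are all correct and essentially match the paper. But there is a genuine gap, and it sits exactly where you say it does: you never prove that (N8) and (N9) force associativity, and without that the implication (3)$\implies$(1) is unproven. Your derivation of (LN) is explicitly conditional on associativity (``granting associativity for the moment''), so it cannot be used to close the cycle, and your final paragraph is a diagnosis of the difficulty --- all available inequalities are deflationary, second-argument monotonicity is unavailable --- rather than a resolution of it. Recognizing the circularity is not the same as breaking it.

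The paper breaks it by reversing your order of attack: it proves (LN) \emph{before} associativity, not from it, and your own antisymmetry strategy works for (LN) because that identity compares two \emph{left-associated} products, where first-argument monotonicity suffices. Concretely, with (N1) in hand one computes, for $x,y,z\in B$,
\begin{align*}
(x\sqcap y)\sqcap z &= [(x\sqcap y)\sqcap z]\sqcap (x\sqcap y) && \text{by (N1)}\\
&= \bigl([(x\sqcap y)\sqcap z]\sqcap (x\sqcap y)\bigr)\sqcap y && \text{by (N9)}\\
&= [(x\sqcap y)\sqcap z]\sqcap y && \text{by (N1)}\\
&\leq (x\sqcap z)\sqcap y\,,
\end{align*}
the last step by two applications of first-argument monotonicity from $x\sqcap y\leq x$; swapping $y$ and $z$ gives the reverse inequality, and antisymmetry yields (LN). Only then does associativity fall, by a purely equational chain in which (LN) does the work you hoped monotonicity would do:
\begin{align*}
(x\sqcap y)\sqcap z &= (x\sqcap z)\sqcap y && \text{by (LN)}\\
&= (x\sqcap z)\sqcap (y\sqcap (x\sqcap z)) && \text{by (N8)}\\
&= (x\sqcap z)\sqcap ((y\sqcap (x\sqcap z))\sqcap z) && \text{by (N9)}\\
&= (x\sqcap z)\sqcap ((y\sqcap z)\sqcap (x\sqcap z)) && \text{by (LN)}\\
&= (x\sqcap z)\sqcap (y\sqcap z) && \text{by (N8)}\\
&= (x\sqcap (y\sqcap z))\sqcap z && \text{by (LN)}\\
&= x\sqcap (y\sqcap z) && \text{by (N9)}\,.
\end{align*}
This is precisely the ``careful forward chain'' you predicted would be needed; the structural insight you were missing is that (LN) is the accessible intermediate target, while a direct assault on associativity (comparing a left-associated with a right-associated product) is what keeps collapsing back on itself.
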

\begin{proof}
(1)$\implies$(2) is trivial.

(2)$\implies$(3): For $x,y,z\in B$, (N9) follows from associativity and idempotence. Now fix $x,y\in B$. We already have $(x\sqcap y)\sqcap x\leq x\sqcap y$ in any right-residuated magma. Associativity and idempotence gives $((x\sqcap y)\sqcap x)\sqcap (x\sqcap y) = x\sqcap y$, and so $x\sqcap y\leq (x\sqcap y)\sqcap x$ by (N). Therefore (N1) holds in $(B,\sqcap)$. Now (N8) in $(B,\sqcap)$ follows from (N1) and associativity.

(3)$\implies$(1): First note that for $x,y\in B$, we have
\[
(x\sqcap y)\sqcap x = (x\sqcap (y\sqcap x))\sqcap x = x\sqcap (y\sqcap x) = x\sqcap y\,,
\]
using (N8), (N9) and (N8) again. Thus (N1) holds in $(B,\sqcap)$. Now for $x,y,z\in B$, we compute
\begin{align*}
  (x\sqcap y)\sqcap z &= [(x\sqcap y)\sqcap z]\sqcap (x\sqcap y)    &&\text{by (N1)}\\
  &= ([(x\sqcap y)\sqcap z]\sqcap (x\sqcap y))\sqcap y              &&\text{by (N9)}\\
  &= [(x\sqcap y)\sqcap z]\sqcap y                                  &&\text{by (N1)}\\
  &\leq (x\sqcap z)\sqcap y\,,
\end{align*}
where the last inequality follows from $x\sqcap y\leq x$ and from $\sqcap$ being order-preserving in its first argument. Reversing the roles of $y$ and $z$, we also have $(x\sqcap z)\sqcap y\leq (x\sqcap y)\sqcap z$, and so (LN) holds.

Now for $x,y,z\in B$, we compute
\begin{align*}
  (x\sqcap y)\sqcap z &= (x\sqcap z)\sqcap y             &&\text{by (LN)}\\
   &= (x\sqcap z)\sqcap (y\sqcap (x\sqcap z))            &&\text{by (N8)}\\
   &= (x\sqcap z)\sqcap ((y\sqcap (x\sqcap z))\sqcap z)  &&\text{by (N9)}\\
   &= (x\sqcap z)\sqcap ((y\sqcap z)\sqcap (x\sqcap z))  &&\text{by (LN)}\\
   &= (x\sqcap z)\sqcap (y\sqcap z)                      &&\text{by (N8)}\\
   &= (x\sqcap (y\sqcap z))\sqcap z                      &&\text{by (LN)}\\
   &= x\sqcap (y\sqcap z)                                &&\text{by (N9)}\,.
\end{align*}
This proves associativity, and so $(B,\sqcap)$ is a left normal band as claimed.
\end{proof}

As noted, $\sqcap$-reducts of right hoops are semilattices. A natural nonassociative
generalization of right hoops is the quasivariety of right-residuated magmas described in the next result. The description of the $\sqcap$-reduct generalizes (\cite{JipsenMontagna}, Lem. 4).

\begin{thm}\label{Thm:comm_meet}
Let $(A,\cdot,\rdiv)$ be a right-residuated magma satisfying \textup{(N)}, and let $B\subseteq A$ be closed under $\sqcap$. The following are equivalent.
\begin{enumerate}
    \item\quad $(B,\sqcap)$ is a semilattice;
    \item\quad $(B,\sqcap)$ is commutative;
    \item\quad $(B,\sqcap)$ satisfies \textup{(N1)} and $x\sqcap y \leq y$ for all $x,y\in B$.
\end{enumerate}
\end{thm}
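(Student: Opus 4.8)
The plan is to prove the three conditions equivalent by the cycle $(1)\Rightarrow(2)\Rightarrow(3)\Rightarrow(1)$, the first implication being immediate since a semilattice is by definition commutative. For $(2)\Rightarrow(3)$, assuming $(B,\sqcap)$ is commutative, I would get $x\sqcap y\leq y$ for free from $x\sqcap y = y\sqcap x\leq y$, the last inequality being the always-available $a\sqcap b\leq a$. For (N1), since $x\sqcap y\leq x$ holds in any right-residuated magma, (N) gives $x\sqcap y = x\sqcap(x\sqcap y)$, and commuting the right-hand side turns this into $(x\sqcap y)\sqcap x = x\sqcap y$.

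The substance is $(3)\Rightarrow(1)$, and here the idea is to show that under (3) the operation $\sqcap$ computes the greatest lower bound with respect to $\leq$ on $B$; once that is known, $\sqcap$ is automatically a semilattice operation (idempotent, commutative and associative), since all three laws are inherited from the corresponding properties of binary meets in a poset. That $x\sqcap y$ is a \emph{lower} bound of $x$ and $y$ is immediate: $x\sqcap y\leq x$ holds in every right-residuated magma and $x\sqcap y\leq y$ is exactly the hypothesis in (3).

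The crux is that $x\sqcap y$ is the \emph{greatest} lower bound, i.e.\ that $c\leq x$ and $c\leq y$ imply $c\leq x\sqcap y$ for $c,x,y\in B$. The key auxiliary step I would isolate first is the implication $c\leq y\Rightarrow c\sqcap y = c$: from $c\leq y$ and (N) we have $c = y\sqcap c$, whence $c\sqcap y = (y\sqcap c)\sqcap y = y\sqcap c = c$, the middle equality being (N1) and the last using $c = y\sqcap c$ again. With this in hand the greatest-lower-bound property falls out from order-preservation of $\sqcap$ in its first argument: from $c\leq x$ we get $c\sqcap y\leq x\sqcap y$, and substituting $c\sqcap y = c$ (valid since $c\leq y$) yields $c\leq x\sqcap y$, as required. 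Thus $x\sqcap y$ is the meet of $x$ and $y$ on $B$, and $(B,\sqcap)$ is a semilattice.

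I expect the main obstacle to be exactly this greatest-lower-bound step; the auxiliary identity $c\leq y\Rightarrow c\sqcap y = c$ is what makes it go through, and the temptation is instead to manipulate $(x\sqcap y)\sqcap c$ directly, which seems to require an associativity one does not yet have. An alternative would be to verify (N8) and (N9) and invoke Theorem~\ref{Thm:lnb}, but since $c\sqcap(y\sqcap c) = y\sqcap c$ follows from $y\sqcap c\leq c$ and (N), one checks that (N8) is equivalent to commutativity in the presence of (3); that route therefore still has commutativity as its hard core, so the direct meet argument above is preferable.
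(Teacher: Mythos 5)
Your proof is correct, but it takes a genuinely different route from the paper's on the substantive direction. The paper proves (2)$\implies$(3) and (3)$\implies$(2) separately, and then obtains (1) from (2) and (3) together by verifying the identities (N8) and (N9) and invoking Theorem~\ref{Thm:lnb} to get associativity, so that $\sqcap$ is a semilattice operation because it is idempotent, commutative and associative. You instead show directly from (3) that $x\sqcap y$ is the greatest lower bound of $x$ and $y$ in the poset $(B,\leq)$: it is a lower bound by the hypothesis $x\sqcap y\leq y$ together with the always-available $x\sqcap y\leq x$, and your auxiliary implication $c\leq y\Rightarrow c\sqcap y=c$ (from (N) and (N1), both legitimately applicable since $c,y\in B$) combined with order-preservation of $\sqcap$ in its first argument shows that any common lower bound $c\in B$ satisfies $c\leq x\sqcap y$. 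Since binary meets in a poset are automatically idempotent, commutative and associative, (1) follows, and the cycle (1)$\implies$(2)$\implies$(3)$\implies$(1) closes. Your argument is self-contained and conceptually sharper: it identifies condition (3) as saying precisely that $\sqcap$ computes binary meets on $(B,\leq)$, and it avoids any appeal to Theorem~\ref{Thm:lnb}. What the paper's route buys is reuse of the left-normal-band machinery (where the real associativity work lives) and a direct proof of the equivalence (2)$\iff$(3) that does not pass through (1). Your (2)$\implies$(3) step is essentially the paper's, with a slightly more direct derivation of $x\sqcap y\leq y$ from commutativity.
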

\begin{proof}
(1)$\implies$(2) is trivial.

(2)$\implies$(3): For $x,y\in B$, $(x\sqcap y)\sqcap x = x\sqcap (x\sqcap y)
= x\sqcap y$ since $x\sqcap y\leq x$ in any right-residuated magma. Thus (N1) holds,
and then using (N1), we have $y\sqcap (x\sqcap y) = (y\sqcap x)\sqcap x = y\sqcap x = x\sqcap y$, so that $x\sqcap y\leq y$ by (N).

(3)$\implies$(2): By (N1) and order-preservation of $\sqcap$ in the first argument,
$x\sqcap y = (x\sqcap y)\sqcap x\leq y\sqcap x$ for all $x,y\in B$. The reverse inequality holds by interchanging $x$ and $y$.

(2),(3)$\implies$(1): For $x,y\in B$, we have $x\sqcap (y\sqcap x) = (x\sqcap y)\sqcap x = x\sqcap y$ by commutativity and (N1), so (N8) holds. Since $u\sqcap v\leq u$, we have for $x,y,z\in B$, $(z\sqcap y)\sqcap x \leq z\sqcap y\leq z$. Thus by (N),
$z\sqcap ((z\sqcap y)\sqcap x) = (z\sqcap y)\sqcap x$. Rearranging both sides of this last equation using commutativity gives (N9). Applying Theorem \ref{Thm:lnb}, we infer that $(B,\sqcap)$ is associative, and since it is also commutative and idempotent, we have the desired result.
\end{proof}

In a left normal band, the identity $x\sqcap y\sqcap z = x\sqcap z\sqcap y$ essentially expresses the fact that every principal order ideal $(a] = \{x\in A\mid x\leq a\} = \{ a\sqcap x\mid x\in A\}$ is a subsemilattice. The same role is played by $(x\sqcap y)\sqcap z=(x\sqcap z)\sqcap y$ in narhoops. Note that $(a]=\{a\sqcap x\mid x\in A\}$ also holds for narhoops since
\[
x\leq a\quad\iff\quad x=a\sqcap x\quad\iff\quad x=a\sqcap y\text{ for some }y\in A
\]
where the second equivalence follows from $a\sqcap(a\sqcap x)=a\sqcap x$, which
in turn follows from (N) and $a\sqcap x\leq a$.

\begin{thm}\label{Thm:principalideals}
Let $(A,\cdot,\rdiv)$ be a right-residuated magma satisfying \textup{(N)}. Then:
\begin{enumerate}
\item Each $(a]$ is closed under $\sqcap$.
\item The following are equivalent.
\begin{enumerate}
\item $A$ is a narhoop,
\item each $((a],\sqcap)$ is commutative,
\item each $((a],\sqcap)$ is associative.
\end{enumerate}
\end{enumerate}
\end{thm}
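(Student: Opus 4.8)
The plan is to prove part (1) directly and then establish the three-way equivalence in part (2) as a cycle (a)$\Rightarrow$(b)$\Rightarrow$(c)$\Rightarrow$(a), using Theorems \ref{Thm:comm_meet} and \ref{Thm:lnb} with $B=(a]$. These applications are legitimate precisely because part (1) guarantees each $(a]$ is closed under $\sqcap$. For part (1) itself I would simply recall that $x\sqcap y\le x$ in every right-residuated magma, so for $x,y\in(a]$ transitivity of $\le$ gives $x\sqcap y\le x\le a$, whence $x\sqcap y\in(a]$.

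Two elementary observations make the cycle short. First, $\sqcap$ is order-preserving in its first argument, being the composite $x\mapsto (x\rdiv y)y$ of two maps that are order-preserving in their first argument; hence for $x,y\le a$ we get $x\sqcap y\le a\sqcap y=y$, the last equality by \textup{(N)}. So the inequality $x\sqcap y\le y$ demanded by Theorem \ref{Thm:comm_meet}(3) holds automatically inside every $(a]$, and the only remaining ingredient for commutativity of $((a],\sqcap)$ is \textup{(N1)}. Second, \textup{(N)} applied to $a\le a$ yields $a=a\sqcap a$, so $\sqcap$ is idempotent and the associative case of Theorem \ref{Thm:lnb} genuinely delivers a left normal band.

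With these in hand the first two implications are immediate. For (a)$\Rightarrow$(b): a narhoop satisfies \textup{(N1)} throughout $A$, hence in each $(a]$, and together with $x\sqcap y\le y$ this is exactly condition (3) of Theorem \ref{Thm:comm_meet}, so each $((a],\sqcap)$ is commutative. For (b)$\Rightarrow$(c): commutativity upgrades via Theorem \ref{Thm:comm_meet} to a semilattice, which is in particular associative.

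The one step requiring an idea is (c)$\Rightarrow$(a), where I must recover the global identity \textup{(N1)} from associativity of the principal ideals; the obstacle is that the second variable $y$ of \textup{(N1)} need not lie below $x$. I would sidestep this by noting that both $x$ and $x\sqcap y$ do lie in $(x]$. By (c) and Theorem \ref{Thm:lnb}, $((x],\sqcap)$ is a left normal band, so it satisfies \textup{(N1)} internally (in any left normal band $(p\sqcap q)\sqcap p=p\sqcap q\sqcap p=p\sqcap p\sqcap q=p\sqcap q$); applying this to the pair $p=x$, $q=x\sqcap y$ gives $(x\sqcap(x\sqcap y))\sqcap x=x\sqcap(x\sqcap y)$. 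Since $x\sqcap y\le x$, \textup{(N)} yields $x\sqcap(x\sqcap y)=x\sqcap y$, and substituting this on both sides collapses the identity to $(x\sqcap y)\sqcap x=x\sqcap y$. As $x,y$ are arbitrary, \textup{(N1)} holds on all of $A$, so the right-residuated magma $A$ satisfying \textup{(N)} is a narhoop. I expect this reduction---rewriting $x\sqcap y$ as $x\sqcap(x\sqcap y)$ to bring the incomparable variable into range of the ideal's internal identity---to be the crux of the whole proof.
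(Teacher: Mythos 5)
Your proposal is correct and takes essentially the same route as the paper: part (1) from $x\sqcap y\le x\le a$ and transitivity, part (2) as the cycle (a)$\implies$(b)$\implies$(c)$\implies$(a) using Theorem \ref{Thm:comm_meet} for the first two implications, and the same crux in (c)$\implies$(a), namely rewriting $x\sqcap y$ as $x\sqcap(x\sqcap y)$ so that the hypothesis on $((x],\sqcap)$ can reach the possibly incomparable variable $y$. The only difference is cosmetic: you invoke Theorem \ref{Thm:lnb} to upgrade associativity of $((x],\sqcap)$ to the left normal band law and then read off \textup{(N1)} directly, whereas the paper runs the equivalent associativity-and-idempotence computation by hand inside $(x]$ and finishes with \textup{(N)} and antisymmetry.
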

\begin{proof}
(1) We have $(x\sqcap y)\sqcap z \leq x\sqcap y \leq x$ so $x\sqcap ((x\sqcap y)\sqcap z) = (x\sqcap y)\sqcap z$ for all $x,y,z\in A$. Thus
$x\sqcap ((x\sqcap y)\sqcap (x\sqcap z)) = (x\sqcap y)\sqcap (x\sqcap z)$, and letting $x=a$ we get the desired result.

(2) (a)$\implies$(b): From $x\sqcap z\leq x$ we obtain $(x\sqcap z)\sqcap (x\sqcap y)\leq x\sqcap (x\sqcap y) = x\sqcap(x\sqcap y) = x\sqcap y$. Letting $x=a$ again, it follows from Theorem~\ref{Thm:comm_meet} that $((a],\sqcap]$ is a semilattice.

(b)$\implies$(c) follows from (1) and Theorem \ref{Thm:comm_meet}.

For (c)$\implies$(a), it is enough to prove (N1).
Since $x \sqcap y\leq x$, we have $x \sqcap (x \sqcap y) = x \sqcap y$. Thus
\begin{align*}
x \sqcap y &= (x \sqcap y) \sqcap (x \sqcap y) &&\text{by idempotence,}\\
&= (x \sqcap y) \sqcap (x \sqcap (x \sqcap y))\,, &&\\
&= (x \sqcap y) \sqcap ((x \sqcap x) \sqcap (x \sqcap y)) &&\text{by idempotence,}\\
&= ((x \sqcap y) \sqcap (x \sqcap x)) \sqcap (x \sqcap y) &&\text{by associativity in }((x],\sqcap)\,,\\
&= ((x \sqcap y) \sqcap x) \sqcap (x \sqcap y) &&\text{by idempotence.}
\end{align*}
Hence $x \sqcap y \leq (x \sqcap y) \sqcap x \leq x \sqcap y$, so we get (N1).
\end{proof}

\section{Unital narhoops}
\label{Sec:Unital}

We now consider right-residuated magmas that satisfy (N) and have a left identity element.

\begin{lem}\label{Lem:preunital}
Let $(A,\cdot,\rdiv)$ be a right-residuated magma such that
\textup{(N)} holds. Then
\begin{enumerate}
\item $x\rdiv x$ is a maximal element for all $x\in A$,
\item the identity $(x\rdiv x)y\rdiv y = x\rdiv x$ holds in $A$, and
\item if $A$ has a top element then the term $x\rdiv x$ is this top element.
\end{enumerate}
\end{lem}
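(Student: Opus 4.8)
The plan is to prove part (1) first and then obtain (2) and (3) as one-line consequences of the maximality it establishes. Throughout write $a=x\rdiv x$. The single fact about $a$ I will use repeatedly is that $ax=x$: reflexivity of the poset order gives $x\le x$, so by \textup{(N)} we have $x=x\sqcap x=(x\rdiv x)x=ax$.

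For (1), suppose $a\le z$; since $\le$ is a partial order it suffices to show $z\le a$, as antisymmetry will then force $z=a$. Applying order-preservation of $\cdot$ in its first argument to $a\le z$ and using $ax=x$ gives $x=ax\le zx$. The naive next move, order-preservation of $\rdiv$ in its first argument, only yields $a=x\rdiv x\le zx\rdiv x$, which points the wrong way. The key step is instead to feed $x\le zx$ back through \textup{(N)}: it gives $x=(zx)\sqcap x=(zx\rdiv x)x$, so writing $c=zx\rdiv x$ we obtain the \emph{equation} $cx=x$. Residuation then converts this equation into the reverse-direction bound I need: from $cx=x\le x$ we get $c\le x\rdiv x=a$. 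On the other hand the general inequality $u\le uv\rdiv v$, valid in every right-residuated magma, applied with $u=z$ and $v=x$ gives $z\le zx\rdiv x=c$. Chaining these, $z\le c\le a$, as required.

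With (1) in hand, parts (2) and (3) are immediate. For (2), the same general inequality gives $a=x\rdiv x\le (x\rdiv x)y\rdiv y=ay\rdiv y$; since $a$ is maximal, this forces $ay\rdiv y=a$, which is exactly the identity $(x\rdiv x)y\rdiv y=x\rdiv x$. For (3), if $A$ has a top element $\top$ then $a\le\top$, and maximality of $a$ again forces $\top=a=x\rdiv x$.

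The only genuine content is the key step in (1): turning the inequality $x\le zx$ into the equation $cx=x$ via \textup{(N)} and then flipping it with residuation to bound $zx\rdiv x$ above by $a$. This is the one place where \textup{(N)} is used in an essential way rather than as mere poset-and-residuation bookkeeping, and I expect the main difficulty to lie in lining up these substitutions correctly so that the resulting element $c=zx\rdiv x$ simultaneously sits above $z$ and below $a$.
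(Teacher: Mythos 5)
Your proof is correct and follows essentially the same route as the paper's: both hinge on the pivot element $zx\rdiv x$, showing it lies above $z$ by residuation and below $x\rdiv x$ by converting $x\leq zx$ through \textup{(N)} into the equation $(zx\rdiv x)x=x$ and flipping it with \textup{(Rres)}. The only cosmetic difference is that you obtain $x\leq zx$ directly from order-preservation of $\cdot$ in its first argument, where the paper derives it through a short residuation--transitivity chain; parts (2) and (3) are handled identically.
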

\begin{proof}
(1) Assuming $x\rdiv x\leq y$ for some $x,y\in A$, it suffices to show that $y\leq x\rdiv x$. By right residuation $y\leq yx\rdiv x$, hence $x\rdiv x\leq yx\rdiv x$ follows from transitivity of $\le$. An application of (Rres) gives $(x\rdiv x)x\leq yx$, and by reflexivity of $\le$ we have $x=x\sqcap x=(x\rdiv x)x$. Therefore $x\leq yx$, or equivalently $x=yx\sqcap x=(yx\rdiv x)x$. Using (Rres) once more we have $y\leq yx\rdiv x\leq x\rdiv x$.

(2) From $(x\rdiv x)y\leq (x\rdiv x)y$ we obtain $x\rdiv x\leq ((x\rdiv x)y)/y$, so equality follows from (1).

(3) is also an obvious consequence of (1).
\end{proof}

Note that the preceding lemma applies to narhoops.

\begin{lem}\label{Lem:unital}
Let $(A,\cdot,\rdiv)$ be a right-residuated magma that satisfies \textup{(N)}. The following are equivalent.
\begin{enumerate}
  \item $x\rdiv x = y\rdiv y$ for all $x,y\in A$;
  \item $(x\rdiv x)y = y$ for all $x,y\in A$;
  \item There exists $e\in A$ such that $ey=y$ for all $y\in A$.
\end{enumerate}
When these conditions hold, the element $1 = x\rdiv x$ is the maximum left identity element
in $(A,\leq)$.
\end{lem}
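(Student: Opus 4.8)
The plan is to establish the three-way equivalence by the cycle $(1)\Rightarrow(2)\Rightarrow(3)\Rightarrow(1)$ and then to verify the final assertion separately. Throughout I would lean on Lemma~\ref{Lem:preunital}: part (1) tells me that each $x\rdiv x$ is a maximal element, and part (2) supplies the identity $(x\rdiv x)y\rdiv y = x\rdiv x$. I would also use the consequence of reflexivity together with (N) that $(x\rdiv x)x = x\sqcap x = x$ for all $x$, and (Rres) in the form $ab\le c \iff a\le c\rdiv b$, as well as the order-preservation of $\cdot$ in its first argument.

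The two easy implications come first. For $(1)\Rightarrow(2)$, if all $x\rdiv x$ coincide then, writing $1$ for this common value, reflexivity gives $1\cdot y = (y\rdiv y)y = y$, which is exactly (2). For $(2)\Rightarrow(3)$, I simply fix any $a\in A$ and take $e = a\rdiv a$; then (2) says $ey=y$ for all $y$, so $e$ is the required left identity.

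The implication $(3)\Rightarrow(1)$ is where the real work lies, and I expect it to be the main obstacle. Starting from a left identity $e$, the first step is to observe that $ex = x\le x$ forces $e\le x\rdiv x$ via (Rres). The crucial step is then to upgrade each $x\rdiv x$ into a left identity: fixing $x$ and writing $u=x\rdiv x$, order-preservation of $\cdot$ in its first argument turns $e\le u$ into $z = ez\le uz$, so $z\le uz$; now (N) rewrites this as $z = (uz)\sqcap z = ((uz)\rdiv z)\,z$, and Lemma~\ref{Lem:preunital}(2) collapses $(uz)\rdiv z = u$, giving $z = uz$. Thus $(x\rdiv x)z = z$ for all $x,z$, so every $x\rdiv x$ is a left identity. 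The final step is to collapse these to a single element using maximality: for $m_1 = x\rdiv x$ and $m_2 = y\rdiv y$ I would first note $m_1\rdiv m_1 = m_1$ (since $m_1$ is an idempotent left identity, $m_1 m_1 = m_1$ gives $m_1\le m_1\rdiv m_1$ by (Rres), and maximality of $m_1$ forces equality), and then use that $m_2$ is a left identity to get $m_2 m_1 = m_1$, whence $m_2\le m_1\rdiv m_1 = m_1$ by (Rres) and therefore $m_2 = m_1$ by maximality of $m_2$. This yields (1).

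For the closing claim, once the conditions hold the element $1 = x\rdiv x$ is well defined by (1) and is a left identity by (2). To see that it is the greatest left identity, let $f$ be any left identity; then $fx = x\le x$ gives $f\le x\rdiv x = 1$ by (Rres), so every left identity lies below $1$.
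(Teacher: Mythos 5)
Your proposal is correct, and most of it coincides with the paper's proof: your $(1)\Rightarrow(2)$ and $(2)\Rightarrow(3)$ are the paper's arguments, and the first half of your $(3)\Rightarrow(1)$ --- using (Rres) to get $e\le x\rdiv x$, order-preservation of $\cdot$ in the first argument to get $z\le (x\rdiv x)z$, then (N) and Lemma~\ref{Lem:preunital}(2) to collapse $((x\rdiv x)z)\rdiv z$ to $x\rdiv x$ --- is precisely the paper's proof of $(3)\Rightarrow(2)$. The one place you genuinely diverge is in closing the cycle: having shown every $x\rdiv x$ is a left identity, you deduce (1) by an extra maximality argument (idempotence of $m_1$ gives $m_1\le m_1\rdiv m_1$, hence $m_1\rdiv m_1=m_1$ by Lemma~\ref{Lem:preunital}(1) and antisymmetry, then $m_2 m_1=m_1$ forces $m_2\le m_1$ and equality again by maximality). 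The paper instead gets $(2)\Rightarrow(1)$ in a single line from Lemma~\ref{Lem:preunital}(2) alone: $x\rdiv x = (x\rdiv x)y\rdiv y = y\rdiv y$. Both arguments are valid; the paper's is shorter and avoids any appeal to maximality or antisymmetry, while yours buys nothing extra but is equally rigorous, so the difference is one of economy rather than substance.
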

\begin{proof}
(1)$\implies$(2): $(x\rdiv x)y = (y\rdiv y)y = y\sqcap y = y$ by (N).

(2)$\implies$(1): Using Lemma \ref{Lem:preunital}(2), $x\rdiv x = (x\rdiv x)y\rdiv y = y\rdiv y$.

(1),(2)$\implies$(3): Setting $e = x\rdiv x$, we have $ey=y$ for all $y\in A$.

(3)$\implies$(2): Since $ex=x$, we have $e\leq x\rdiv x$ by (Rres), and thus
$y = ey \leq (x\rdiv x)y$ by order-preservation of $\cdot$ in the first argument. Hence $y = ((x\rdiv x)y\rdiv y)y = (x\rdiv x)y$ using Lemma \ref{Lem:preunital}(2).

The remaining assertion follows from $e\leq x\rdiv x$ as in the preceding paragraph.
\end{proof}

A right-residuated magma $(A,\cdot,\rdiv)$ that satisfies (N)
is called \emph{unital} if the equivalent conditions of
Lemma \ref{Lem:unital} hold. In this case as in the lemma, we denote by $1 = x\rdiv x$ the distinguished left identity element. Note that the lemma does not claim that $1$ is the unique left identity element, even when $\sqcap$ is commutative. Unital narhoops are defined similarly.

\begin{thm}\label{Thm:finite}
If $A$ is finite and unital, then $1$ is the unique left identity element.
\end{thm}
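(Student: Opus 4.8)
The plan is to fix an arbitrary left identity $e$ and prove $e=1$. By Lemma~\ref{Lem:unital} the element $1=x\rdiv x$ is the \emph{maximum} left identity, so $e\le 1$; thus it suffices to show $1\le e$, i.e.\ $e=1$. The whole argument will take place inside the principal ideal $(1]=\{x:x\le 1\}$, on which I will exhibit a self-map that finiteness forces to be injective.

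The map I would use is right multiplication by $1$, namely $\rho\colon x\mapsto x\cdot 1$. First one checks that $\rho$ restricts to a map $(1]\to(1]$: if $x\le 1$ then (Rres) gives $x\cdot1\le1\iff x\le 1\rdiv1=1$, so $x\cdot1\le1$. The crucial step is that $\rho|_{(1]}$ is \emph{surjective}, and this is exactly where the narhoop axiom (N1) enters. From (N) we have $1\sqcap x=x$ whenever $x\le1$, and applying (N1) with the pair $(1,x)$ gives $(1\sqcap x)\sqcap1=1\sqcap x$, that is $x\sqcap1=x$ for all $x\le1$. Since $x\sqcap1=(x\rdiv1)\cdot1$ by definition, this reads $x=(x\rdiv1)\cdot1=\rho(x\rdiv1)$; and because $\rdiv$ is order-preserving in its first argument, $x\le1$ forces $x\rdiv1\le1\rdiv1=1$, so $x\rdiv1\in(1]$. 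Hence every $x\in(1]$ lies in the image of $\rho|_{(1]}$, so $x\mapsto x\rdiv1$ is a right inverse (a section) of $\rho$ on $(1]$.

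Finiteness now finishes the proof: a surjective self-map of the finite set $(1]$ is automatically injective, so $\rho|_{(1]}$ is a bijection. Both $e$ and $1$ lie in $(1]$ and are left identities, whence $\rho(e)=e\cdot1=1=1\cdot1=\rho(1)$; injectivity then yields $e=1$, as required.

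The main thing to get right is the choice of this finite self-map together with its section $x\mapsto x\rdiv1$: once one sees that $x\rdiv1$ is a right inverse of right multiplication by $1$ on $(1]$, surjectivity and hence injectivity are immediate, and finiteness is used only through the elementary fact that a surjection of a finite set onto itself is injective. I expect the only genuinely load-bearing point to be the essential use of (N1) in proving $x\sqcap1=x$ on $(1]$; without it the statement fails, since one can build small right-residuated magmas satisfying (N) and unitality that possess two left identities, so the axiom (N1) cannot be dropped.
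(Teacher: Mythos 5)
Your proof is correct and is essentially the paper's own argument in cleaner packaging: the paper iterates $x\mapsto x\rdiv 1$ starting at $e$, proves $e_{k+1}\cdot 1=e_k$ (its identity $(\ast)$, which is exactly your statement that $x\mapsto x\rdiv 1$ is a section of $\rho\colon x\mapsto x\cdot 1$ on $(1]$), and then uses finiteness to walk a repeated term of the sequence back to $e=1$, which is just the orbit form of your ``surjective self-map of a finite set is injective'' step. Both arguments rest on the same two facts: $e\leq 1$ by maximality of the left unit (Lemma~\ref{Lem:unital}), and $x\sqcap 1=x$ for $x\leq 1$, which needs (N1) exactly as you point out.
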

\begin{proof}
Let $e\in A$ be a left identity element. Define a sequence of elements of $A$ as follows:
$e_0 := e$, $e_k := e_{k-1}\rdiv 1$ for $k > 0$.

We claim that for all $k\geq 0$, $e_k \leq 1$. For $k = 0$, this follows
from Lemma \ref{Lem:unital}. If the claim holds for any $k\geq 0$, then
$e_{k+1} = e_k \rdiv 1 \leq 1\rdiv 1 = 1$,
using order-preservation of $/$ in the numerator. Thus the claim follows by induction on $k$. In particular, we have
\[
  e_{k+1} 1 = (e_k\rdiv 1)1 = e_k \sqcap 1 = e_k        \tag{$\ast$}
\]
for all $k\geq 0$.

Since $A$ is finite, the sequence $\{e_k\}_{k\geq 0}$ is finite.
Thus there exist $m > n \geq 0$ such that $e_m = e_n$. If $n > 0$, then by ($\ast$),
$e_{m-1} = e_m 1 = e_n 1 = e_{n-1}$. Thus we may assume $n = 0$, that is,
$e_m = e$ for some $m > 0$. Thus $e_{m-1} = e_m 1 = e 1 = 1$, and so
$e = e_m = e_{m-1}\rdiv 1 = 1\rdiv 1 = 1$. This completes the proof.
\end{proof}

It is an open problem whether there exists a (necessarily infinite) unital narhoop with more than one left identity element.

Note that in a unital narhoop $A$ the partial order $\leq$ can be characterized in terms of $1$ and $\rdiv$:
\[
x\leq y \quad\iff\quad y\rdiv x = 1     \tag{U}
\]
for all $x,y\in A$.
Furthermore, the left identity $1$ can also be used to characterize the
commutativity of $\sqcap$.

\begin{thm}
Let $(A,\cdot,\rdiv,1)$ be a unital narhoop.
Then:
\begin{enumerate}
\item The left unit $1$ is the top element of $(A,\leq)$ if and only if $\sqcap$ is commutative.
\item The principal ideal $(1]$ is a subnarhoop and $((1],\sqcap)$ is a semilattice.
\end{enumerate}
\end{thm}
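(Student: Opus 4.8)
The plan is to prove the two parts essentially separately, the main tools being Theorem~\ref{Thm:comm_meet}, Theorem~\ref{Thm:principalideals}, the maximality of $1$ from Lemma~\ref{Lem:preunital}(1), and the left-identity law $1y=y$ from Lemma~\ref{Lem:unital}.

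For the forward implication of (1), I would observe that if $1$ is the top element then $(1]=A$; since $A$ is a narhoop, Theorem~\ref{Thm:principalideals}(2) makes every $((a],\sqcap)$ commutative, and taking $a=1$ gives commutativity of $(A,\sqcap)=((1],\sqcap)$. The converse is where the real work lies. From commutativity, Theorem~\ref{Thm:comm_meet} supplies the inequality $u\sqcap v\le v$ for all $u,v$. Applied with $v=1$ this reads $(w\rdiv 1)\cdot 1=w\sqcap 1\le 1$, whence (Rres) gives $w\rdiv 1\le 1\rdiv 1=1$ for every $w$. The decisive move is then to feed $x\cdot 1$ into this: by (N2) we have $x\le (x\cdot 1)\rdiv 1$, and by the previous observation $(x\cdot 1)\rdiv 1\le 1$, so transitivity yields $x\le 1$ for all $x$, i.e.\ $1$ is the top element. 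I expect this telescoping through $x\cdot 1\rdiv 1$ to be the crux of the whole theorem: a direct attempt to prove $1\sqcap x=x$ stalls on the circular equivalence $1\le 1\rdiv x\iff x\le 1$, and the point of routing through $x\cdot 1$ is precisely to break that circle using (N2).

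For part (2) the semilattice assertion is quick: $(1]$ is closed under $\sqcap$ by Theorem~\ref{Thm:principalideals}(1), it is commutative by Theorem~\ref{Thm:principalideals}(2), and Theorem~\ref{Thm:comm_meet} then promotes commutativity to a semilattice structure. It remains to check that $(1]$ is a subnarhoop, and since narhoops form a variety by Theorem~\ref{Thm:variety} it suffices to verify closure under $\cdot$ and $\rdiv$ (closure under $\sqcap$ and $1\in(1]$ being clear). For $\cdot$, if $x,y\le 1$ then order-preservation in the first argument gives $xy\le 1y=y\le 1$. For $\rdiv$ I would first record that $y\le 1$ forces $1\rdiv y=1$: indeed (Rres) gives $1\le 1\rdiv y\iff 1y\le 1$, and the right-hand side holds because $1y=y\le 1$, so $1\le 1\rdiv y$, and maximality of $1$ (Lemma~\ref{Lem:preunital}(1)) upgrades this to $1\rdiv y=1$; then for $x,y\le 1$ order-preservation of $\rdiv$ in the numerator gives $x\rdiv y\le 1\rdiv y=1$, as needed. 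The one point to stay alert to throughout is that $1$ is only a left identity, so $x\cdot 1$ cannot be simplified to $x$; this is exactly why the $\rdiv$-closure argument is phrased via $y\le 1\Rightarrow 1\rdiv y=1$ rather than via a (false) right-identity law.
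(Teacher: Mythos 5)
Your proof is correct and takes essentially the same approach as the paper: the forward direction of (1) via Theorem~\ref{Thm:principalideals}, the converse by the chain $x\leq (x\cdot 1)\rdiv 1\leq 1\rdiv 1=1$, which is exactly the paper's computation $x\leq xy\rdiv y=(y\sqcap xy)\rdiv y\leq y\rdiv y=1$ specialized at $y=1$ (both hinge on commutativity giving $u\sqcap v\leq v$, then residuating), and part (2) by the same order-preservation closure arguments combined with Theorems~\ref{Thm:principalideals} and~\ref{Thm:comm_meet}. One small merit of your write-up: you explicitly justify that $y\leq 1$ forces $1\rdiv y=1$ via (Rres) and the maximality of $1$ from Lemma~\ref{Lem:preunital}(1), a step the paper leaves implicit when it writes $a\rdiv b\leq 1\rdiv b=1$.
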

\begin{proof}
(1) The forward direction follows from Theorem~\ref{Thm:principalideals}.

Conversely, assume $\sqcap$ is commutative. Then for all $x,y\in A$,
\begin{align*}
  x &\leq xy\rdiv y         &&\text{by (Rres)} \\
   &= (xy\rdiv y)y\rdiv y   &&\text{by (Rres)} \\
   &= (xy\sqcap y)\rdiv y     && \\
   &= (y\sqcap xy)\rdiv y    &&\text{by commutativity of }\sqcap \\
   &\leq y\rdiv y           &&\\
   &= 1\,,
\end{align*}
where the second-last step follows from $x\sqcap y\leq x$ and order-preservation
of $/$ in the numerator. Thus $1$ is the top element of $(A,\leq)$ as claimed.

(2) If $a,b\leq 1$, then $ab\leq 1b = b\leq 1$ and $a\rdiv b \leq 1\rdiv b = 1$
by order-preservation of $\cdot$ and $/$ in the first argument. Thus $(1]$ is a subnarhoop and the rest follows from Theorem \ref{Thm:principalideals}.
\end{proof}

We end this section by considering right-residuated magmas with a bottom or top element.

\begin{thm}
Let $(A,\cdot,\rdiv)$ be a right-residuated magma that satisfies \textup{(N)}.
\begin{enumerate}
\item If $A$ has a bottom element $0$, then $0\rdiv 0$ is the top element.
\item If $A$ has a top element then
\renewcommand\labelenumii{(\roman{enumii})}
\begin{enumerate}
\item $A$ is unital,
\item $x\sqcap y$ is a lower bound for $x$, $y$, and
\item if $A$ is finite then $A$ has a bottom element.
\end{enumerate}
\end{enumerate}
\end{thm}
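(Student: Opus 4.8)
For (1), I would show directly that $0\rdiv 0$ is the largest element of $A$. By Lemma~\ref{Lem:preunital}(1) we already know $0\rdiv 0$ is maximal, so it suffices to prove $x\le 0\rdiv 0$ for every $x$. The key observation is that (N2) gives $x\le (x\cdot 0)\rdiv 0$, while the fact that $0$ is the bottom gives $0\le x\cdot 0$ and hence $0\rdiv 0\le (x\cdot 0)\rdiv 0$ by order-preservation of $\rdiv$ in its first argument. Since $0\rdiv 0$ is maximal, the inequality $0\rdiv 0\le (x\cdot 0)\rdiv 0$ forces $(x\cdot 0)\rdiv 0 = 0\rdiv 0$, and therefore $x\le (x\cdot 0)\rdiv 0 = 0\rdiv 0$. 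As $x$ was arbitrary, $0\rdiv 0$ is the top element. Note this argument never appeals to associativity or to order-preservation of $\cdot$ in its second argument.

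For (2), suppose $A$ has a top element. By Lemma~\ref{Lem:preunital}(3) this top element equals $x\rdiv x$ for every $x$, so in particular all the terms $x\rdiv x$ coincide; condition (1) of Lemma~\ref{Lem:unital} then yields that $A$ is unital, with $1 = x\rdiv x$ equal to the top element, which is (i). For (ii) I would exploit that $1 = y\rdiv y$ is the largest element: then $x\rdiv y\le y\rdiv y$ for all $x,y$, and applying (Rres) to this inequality gives $(x\rdiv y)y\le y$, that is, $x\sqcap y\le y$. Since $x\sqcap y\le x$ holds in every right-residuated magma, $x\sqcap y$ is a lower bound for both $x$ and $y$.

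For (iii), assume in addition that $A$ is finite; it is nonempty because it has a top element, so it possesses a minimal element $a$. I claim any such $a$ is in fact the bottom. Indeed, for an arbitrary $b$ the element $a\sqcap b$ satisfies $a\sqcap b\le a$, so minimality forces $a\sqcap b = a$; but by (ii) we also have $a\sqcap b\le b$, whence $a\le b$. As $b$ was arbitrary, $a$ is the least element of $A$. All three parts are short once the earlier lemmas are in hand, and the one step needing genuine insight is (1): with associativity unavailable, the only leverage on $0\rdiv 0$ is its maximality, and the device is to manufacture, for each $x$, the element $(x\cdot 0)\rdiv 0$, which sits above $0\rdiv 0$ (since $0\le x\cdot 0$) yet also above $x$ (by (N2)), so that maximality collapses it back to $0\rdiv 0$ and drags $x$ underneath. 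The main pitfall I expect to watch for is inadvertently invoking order-preservation of $\cdot$ in its second argument, which fails in a general right-residuated magma.
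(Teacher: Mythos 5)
Your proposal is correct; every step checks out, and the overall skeleton (manufacture $(x\cdot 0)\rdiv 0$ for (1), identify the top with $x\rdiv x$ for (2)(i), residuate $x\rdiv y\leq y\rdiv y$ for (ii), a finiteness argument for (iii)) matches the paper's. The interesting differences are in the mechanisms for (1) and (2)(i). For (1), the paper runs the inequality in the opposite direction: from $0\leq x\cdot 0$ it applies (N) to get $x0\sqcap 0=0$, i.e.\ $(x0\rdiv 0)0\leq 0$, and then residuates to obtain $x0\rdiv 0\leq 0\rdiv 0$ directly, finishing with (N2) and transitivity; you instead derive $0\rdiv 0\leq (x\cdot 0)\rdiv 0$ by order-preservation and invoke the maximality of $0\rdiv 0$ (Lemma~\ref{Lem:preunital}(1)) to collapse the inequality to an equality. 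Both are sound; the paper's is self-contained modulo (Rres) and (N), while yours is a shorter computation at the cost of leaning on the earlier lemma. For (2)(i), the paper re-derives $\top\rdiv x=\top$ and then $\top x=x$ from scratch, i.e.\ verifies condition (3) of Lemma~\ref{Lem:unital}, whereas you cite Lemma~\ref{Lem:preunital}(3) to get $x\rdiv x=\top$ for all $x$ and verify condition (1) of that lemma instead --- arguably the more economical route and the natural use of Lemma~\ref{Lem:preunital}(3); note, though, that the paper's proof of (ii) reuses the identity $\top=x\rdiv x$ established inside its proof of (i), which your version gets for free from the lemma. For (iii) the paper only says it ``follows directly from (2)(ii)''; your minimal-element argument ($a$ minimal and $a\sqcap b\leq a$ force $a\sqcap b=a\leq b$) is a correct and explicit way to fill that in.
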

\begin{proof}
(1) Assume $0\leq x$ holds for all $x\in A$. Then $0\leq x0$, hence $x0\sqcap 0=0$ by (N). It follows that $(x0\rdiv 0)0\leq 0$, so by residuation we get $x0\rdiv 0\leq 0\rdiv 0$. Since $x\leq x0\rdiv 0$, we obtain $x\leq 0\rdiv 0$.

(2)(i) Assume $\top$ is the top element of $A$. Then $\top x\leq\top$, and since $\rdiv $ is order-preserving in the left argument, we have $(\top x)\rdiv x\leq \top\rdiv x$. From $\top\leq (\top x)\rdiv x$ we see that $\top=(\top x)\rdiv x = \top\rdiv x$.
Now $x\leq \top$ implies $\top\sqcap x=x$ by (N), hence $(\top\rdiv x)x=x$. Since we already showed that $\top\rdiv x=\top$, it follows that $\top x=x$.

(2)(ii) It is always the case that $x\sqcap y\leq x$ since $x\rdiv y\leq x\rdiv y$
implies $(x\rdiv y)*y\leq x$ by residuation.
As shown in (i), $\top=(\top x)\rdiv x$ and $\top x=x$, so $\top=x\rdiv x$ for any $x\in A$. Hence $x\rdiv y\leq y\rdiv y$, and again by residuation we have $x\sqcap y=(x\rdiv y)y\leq y$.

(2)(iii) follows directly from (2)(ii).
\end{proof}

\section{Unital Congruences}
\label{Sec:congruences}
A congruence $\theta$ on a narhoop $(A,\cdot,\rdiv)$ will be said to be \emph{unital} if
the factor narhoop $A/\theta$ is unital. In other words, $\theta$ is unital if and only
if $x\rdiv x\reltheta y\rdiv y$ for all $x,y\in A$. If $A$ itself is unital then every
congruence on $A$ is unital. For a unital congruence on an arbitrary narhoop, set
\begin{align*}
  N_{\theta} &= \{x\in A\mid x\reltheta y\rdiv y\text{ for some }y\in A\} \\
   &= \{x\in A\mid x\reltheta y\rdiv y\text{ for all }y\in A\}\,,
\end{align*}
where the second equality follows since $\theta$ is unital.
Analogous to the relationship between congruences and normal subgroups in group theory,
we now show that $\theta$ is determined by the congruence class $N_{\theta}$.

\begin{lem}\label{Lem:congruence}
For all $x,y\in A$, $x\reltheta y$ if and only if $x\rdiv y,\ y\rdiv x\in N_{\theta}$.
\end{lem}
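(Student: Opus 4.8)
The plan is to treat the two directions separately. The forward direction is an immediate consequence of the congruence properties, so the real content is in the backward direction, which I would split into a ``translation'' step and a ``combination'' step.

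For the forward direction, assume $x\reltheta y$. Since $\theta$ is compatible with $\rdiv$, substituting in the first argument gives $x\rdiv y\reltheta y\rdiv y$ and $y\rdiv x\reltheta x\rdiv x$. Now $y\rdiv y$ and $x\rdiv x$ both lie in $N_\theta$; in fact $N_\theta$ is exactly their common $\theta$-class, since $\theta$ is unital and so all the ``local units'' $w\rdiv w$ are $\theta$-equivalent. Hence $x\rdiv y,\ y\rdiv x\in N_\theta$ by transitivity.

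For the backward direction, the first step is to convert membership in $N_\theta$ into a statement about $\sqcap$. By the description of $N_\theta$ and unitality of $\theta$, the hypothesis $x\rdiv y\in N_\theta$ means $x\rdiv y\reltheta y\rdiv y$. Right-multiplying by $y$ and using compatibility of $\theta$ with $\cdot$ gives $(x\rdiv y)y\reltheta(y\rdiv y)y$, i.e. $x\sqcap y\reltheta y\sqcap y=y$, where the last equality is reflexivity $y\sqcap y=y$ established in Theorem~\ref{Thm:variety}. Symmetrically, $y\rdiv x\in N_\theta$ yields $y\sqcap x\reltheta x$.

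It then remains to combine $x\sqcap y\reltheta y$ and $y\sqcap x\reltheta x$ into $x\reltheta y$, and this is where I expect the only genuine work to lie, precisely because $\sqcap$ is not assumed commutative. My plan is to invoke (N1): from $x\sqcap y\reltheta y$, replacing the first argument gives $(x\sqcap y)\sqcap x\reltheta y\sqcap x$, and since $(x\sqcap y)\sqcap x=x\sqcap y$ by (N1), this reads $x\sqcap y\reltheta y\sqcap x$. Chaining with $y\sqcap x\reltheta x$ gives $x\sqcap y\reltheta x$, and together with $x\sqcap y\reltheta y$ transitivity delivers $x\reltheta y$. Alternatively one can pass to the unital narhoop $A/\theta$, where the two congruences become $\bar{x}\sqcap\bar{y}=\bar{y}$ and $\bar{y}\sqcap\bar{x}=\bar{x}$, i.e. $\bar{y}\leq\bar{x}$ and $\bar{x}\leq\bar{y}$ by (N), whence antisymmetry forces $\bar{x}=\bar{y}$. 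The main obstacle is recognizing that (N1) is exactly the bridge needed to pass between $x\sqcap y$ and $y\sqcap x$ without commutativity of $\sqcap$.
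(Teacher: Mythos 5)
Your proposal is correct, and your primary argument takes a genuinely different route from the paper. The paper proves the backward direction by passing to the quotient: since $\theta$ is unital, $N_\theta$ is the distinguished left identity of the narhoop $A/\theta$, so $[x]\rdiv[y]=N_\theta$ and $[y]\rdiv[x]=N_\theta$ translate via (U) into $[y]\leq[x]$ and $[x]\leq[y]$, and antisymmetry in $A/\theta$ finishes the job --- this is exactly the ``alternative'' you sketch at the end (modulo using (U) directly rather than (N) applied to $\sqcap$). Your main argument instead stays inside $A$: you right-multiply the memberships $x\rdiv y\reltheta y\rdiv y$ and $y\rdiv x\reltheta x\rdiv x$ to get $x\sqcap y\reltheta y$ and $y\sqcap x\reltheta x$, then use compatibility of $\theta$ with the term operation $u\mapsto u\sqcap x$ together with (N1) to bridge $x\sqcap y\reltheta y\sqcap x$, and chain. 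Every step checks out: congruences respect term operations, $y\sqcap y=y$ holds by reflexivity, and (N1) is indeed the right substitute for commutativity. What the paper's route buys is brevity and a conceptual reading (the lemma is literally antisymmetry in the unital quotient); what your route buys is that it is entirely element-wise and makes explicit which identities of the variety carry the weight, namely (N1) and reflexivity, without invoking the order-theoretic apparatus of $A/\theta$ or the characterization (U).
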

\begin{proof}
If $x\reltheta y$, then $x\rdiv y\reltheta y\rdiv y$ and similarly,
$y\rdiv x\reltheta x\rdiv x$, and so $x\rdiv y,\ y\rdiv x\in N_{\theta}$.

Conversely, suppose $x\rdiv y,\ y\rdiv x\in N_{\theta}$.
Denoting the $\theta$-classes with square brackets, we have $[x]\rdiv [y] = [x\rdiv y] = N_{\theta}$ and $[y]\rdiv [x] = [y\rdiv x] = N_{\theta}$ in $A/\theta$. Since $N_{\theta}$ is the distinguished left identity element of $A/\theta$, we obtain $[x]\leq [y]$ and $[y]\leq [x]$ using (U). Thus $[x]=[y]$, that is, $x\reltheta y$.
\end{proof}

In order to state our characterizations concisely, we introduce six families of mappings on a narhoop $(A,\cdot,\rdiv)$. For each $x,y\in A$, $i = 1,\ldots,6$, define
$\phi_{i,x,y} : A\to A$ by
\begin{align*}
  \phi_{1,x,y}(z) &= (zx\cdot y)\rdiv xy\,,         \\
  \phi_{2,x,y}(z) &= (zx\rdiv y)\rdiv (x\rdiv y)\,, \\
  \phi_{3,x,y}(z) &= (x\cdot zy)\rdiv xy\,,         \\
  \phi_{4,x,y}(z) &= (x\rdiv zy)\rdiv (x\rdiv y)\,, \\
  \phi_{5,x,y}(z) &= xy\rdiv (x\cdot zy)\,,         \\
  \phi_{6,x,y}(z) &= (x\rdiv y)\rdiv (x\rdiv zy)\,.
\end{align*}
Again keeping analogies with group theory in mind, let
\[
\Inn(A) = \langle \phi_{i,x,y}\mid i=1,\ldots,6,\ x,y\in A\rangle\,,
\]
that is, $\Inn(A)$ is the transformation semigroup on $A$ generated by these six families of mappings.

\begin{thm}\label{Thm:congruence}
Let $\theta$ be a unital congruence on a narhoop $(A,\cdot,\rdiv)$. Then:
\begin{enumerate}
  \item\quad $N_{\theta}$ is a subnarhoop of $A$;
  \item\quad For all $x,y\in A$, if $x\leq y$ and $x\in N_{\theta}$, then $y\in N_{\theta}$;
  \item\quad $N_{\theta}$ is invariant under $\Inn(A)$.
\end{enumerate}
\end{thm}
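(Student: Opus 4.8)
The plan is to push everything into the quotient narhoop $A/\theta$, which is unital by hypothesis with distinguished left identity $1=[y\rdiv y]$, and to exploit that $N_{\theta}$ is precisely the $\theta$-class of $1$, i.e.\ the preimage of $\{1\}$ under the canonical surjection $a\mapsto[a]$. Two facts will be used repeatedly. First, since $\sqcap$ is a term and the relation $\le$ is term-definable via \textup{(N)}, the canonical surjection preserves both, so $x\le y$ implies $[x]\le[y]$. Second, since narhoops form a variety (Theorem~\ref{Thm:variety}), a subnarhoop is nothing more than a nonempty subset closed under $\cdot$ and $\rdiv$. I will also use that in the unital narhoop $A/\theta$ one has $[w]\rdiv[w]=1$ and $1\cdot[w]=[w]$ for all $w$, and that $1$ is a maximal element by Lemma~\ref{Lem:preunital}(1) applied to $A/\theta$.

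Parts (1) and (2) are then immediate. For (1), note $N_{\theta}$ is nonempty since it contains every $y\rdiv y$; and if $a,b\in N_{\theta}$ then $[a]=[b]=1$, so $[ab]=1\cdot 1=1$ and $[a\rdiv b]=1\rdiv 1=1$, whence $ab,\,a\rdiv b\in N_{\theta}$. Thus $N_{\theta}$ is closed under both operations and hence a subnarhoop. For (2), if $x\le y$ and $x\in N_{\theta}$, then $1=[x]\le[y]$, and maximality of $1$ forces $[y]=1$, i.e.\ $y\in N_{\theta}$.

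The bulk of the argument is (3). Since $\Inn(A)$ is generated by the $\phi_{i,x,y}$ as a transformation semigroup, and the property ``maps $N_{\theta}$ into $N_{\theta}$'' is closed under composition (if $\phi(N_{\theta})\subseteq N_{\theta}$ and $\psi(N_{\theta})\subseteq N_{\theta}$ then $(\psi\circ\phi)(N_{\theta})\subseteq N_{\theta}$), it suffices to show $\phi_{i,x,y}(N_{\theta})\subseteq N_{\theta}$ for each $i$ and each $x,y$. Fix $z\in N_{\theta}$, so $[z]=1$. The key observation is that $z$ occurs as a \emph{left} factor ($zx$ or $zy$) in every one of the six definitions, so the left-identity law gives $[zx]=1\cdot[x]=[x]$ and $[zy]=1\cdot[y]=[y]$. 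Substituting these into each $\phi_{i,x,y}$ collapses its numerator and denominator to a common value $w$, leaving $[\phi_{i,x,y}(z)]=[w]\rdiv[w]=1$; hence $\phi_{i,x,y}(z)\in N_{\theta}$. For instance $[\phi_{1,x,y}(z)]=[zx\cdot y]\rdiv[xy]=[xy]\rdiv[xy]=1$, and the remaining five cases are checked the same way.

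The only real obstacle is bookkeeping: one must confirm that in each $\phi_{i,x,y}$ the variable $z$ really is a left factor (so that the left-identity reduction applies) and that after reduction the two residuands agree; once this uniform pattern is recognized the six cases are routine. The conceptual content is simply that the maps $\phi_{i,x,y}$ were designed so that feeding in the identity for $z$ returns an element of the form $w\rdiv w$, which lands in $N_{\theta}$.
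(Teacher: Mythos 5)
Your proof is correct, and it takes a genuinely different route from the paper's: you transport everything into the quotient $A/\theta$ (legitimate, since narhoops form a variety by Theorem~\ref{Thm:variety}, so $A/\theta$ is again a narhoop satisfying (N), unital by the definition of unital congruence, with $N_{\theta}$ exactly the preimage of its left unit $1$), whereas the paper computes with $\theta$ inside $A$ itself. For parts (1) and (3) the difference is largely presentational: the paper's proof of (3) notes that each $\phi_{i,x,y}$ preserves $\theta$-classes and then evaluates $\phi_{i,x,y}(x\rdiv x)$ exactly in $A$, e.g.\ $\phi_{1,x,y}(x\rdiv x) = xy\rdiv xy\in N_{\theta}$, which is precisely the collapse you perform after passing to classes; your observation that $z$ always occurs as a left factor is the same mechanism. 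The genuine divergence is part (2): the paper runs an explicit element-level computation (showing $yu = yu\sqcap u\reltheta yu\sqcap xu = xu\reltheta u$, and then $y = (yu\rdiv u)\sqcap y\reltheta x\sqcap y = x$), while you get the conclusion in one line from maximality of the left unit (Lemma~\ref{Lem:preunital}(1) applied to $A/\theta$) together with the fact that the canonical surjection preserves the term-defined order. Your approach buys brevity and a conceptual explanation --- $N_{\theta}$ is an up-set simply because the unit class is a maximal element of the quotient --- while the paper's approach keeps all work at the level of elements of $A$, which matches the style forced on it later in Theorem~\ref{Thm:normal}, where one starts from a normal subnarhoop $N$ and has no quotient available until the congruence $\theta_N$ has been constructed.
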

\begin{proof}
(1) For $x,y\in N_{\theta}$, $x\reltheta y\rdiv y$ and so $xy\reltheta (y\rdiv y)y = y\sqcap y = y$. Thus $xy\in N_{\theta}$. Also, $x\reltheta y$ and so $x\rdiv y\reltheta y\rdiv y$, that is, $x\rdiv y\in N_{\theta}$.

(2) Suppose $x\leq y$ and $x\in N_{\theta}$. For all $u\in A$, $x\reltheta u\rdiv u$ and so $xu\reltheta (u\rdiv u)u = u\sqcap u = u$. Now $xu\leq yu$ by order-preservation, and hence $yu = yu\sqcap u\reltheta yu\sqcap xu = xu\reltheta u$. Thus $yu\rdiv u\reltheta u\rdiv u\reltheta x$, and so $(yu\rdiv u)\sqcap y\reltheta x\sqcap y = x$ since $x\leq y$. We have $y\leq yu\rdiv u$ by residuation, hence $y = (yu\rdiv u)\sqcap y\reltheta x$.
Therefore $y\in N_{\theta}$ as claimed.

(3) Each $\phi_{i,x,y}(z)$ is a term in the operations $\cdot$ and $\rdiv$ and so if $z\reltheta w$, it follows that $\phi_{i,x,y}(z)\reltheta \phi_{i,x,y}(w)$. Thus it is enough to show $\phi_{i,x,y}(z\rdiv z)\in N_{\theta}$ for some $z\in A$. For example,
\[
\phi_{1,x,y}(x\rdiv x) = ((x\rdiv x)x\cdot y)\rdiv xy = (x\sqcap x) y\rdiv xy = xy\rdiv xy\in N_{\theta}\,.
\]
The other five cases are proven similarly.
\end{proof}

Let $(A,\cdot,\rdiv)$ be a narhoop. A nonempty subset $N$ of $A$ is said to be a
\emph{normal subnarhoop} of $A$, denoted $N\unlhd A$, if the following hold:
\begin{enumerate}
  \item $N$ is a subnarhoop of $A$;
  \item For all $x,y\in A$, if $x\leq y$ and $x\in N$, then $y\in N$;
  \item $N$ is invariant under $\Inn(Q)$.
\end{enumerate}

\begin{lem}\label{Lem:N_preorder}
Let $(A,\cdot,\rdiv)$ be a narhoop and assume $N\unlhd A$ is nonempty.
Define $\preceq_N$ on $A$ by $x\preceq_N y$ if and only if $y\rdiv x\in N$. Then:
\begin{enumerate}
  \item\quad For all $x,y,z\in A$, if $x\preceq_N y$, then $xz\preceq_N yz$;
  \item\quad For all $x,y,z\in A$, if $x\preceq_N y$, then $x\rdiv z\preceq_N y\rdiv z$;
  \item\quad $\preceq_N$ is a preorder.
\end{enumerate}
\end{lem}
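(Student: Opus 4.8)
The plan is to derive (1) and (2) directly from the invariance of $N$ under the single generators $\phi_{1,x,z}$ and $\phi_{2,x,z}$ combined with the up-closure of a normal subnarhoop, and then to obtain (3) by proving reflexivity and transitivity separately using the two monotonicity statements and a short auxiliary fact. For (1), I would assume $x\preceq_N y$, i.e.\ $y\rdiv x\in N$, and feed this element into $\phi_{1,x,z}$. Since $(y\rdiv x)x=y\sqcap x$, invariance gives $\phi_{1,x,z}(y\rdiv x)=((y\sqcap x)z)\rdiv(xz)\in N$; because $y\sqcap x\le y$ and both $\cdot$ and $\rdiv$ are order-preserving in their first argument, $((y\sqcap x)z)\rdiv(xz)\le(yz)\rdiv(xz)$, so up-closure of $N$ forces $(yz)\rdiv(xz)\in N$, that is $xz\preceq_N yz$. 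Part (2) runs identically with $\phi_{2,x,z}$: one gets $\phi_{2,x,z}(y\rdiv x)=((y\sqcap x)\rdiv z)\rdiv(x\rdiv z)\le(y\rdiv z)\rdiv(x\rdiv z)$, whence $x\rdiv z\preceq_N y\rdiv z$.

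For transitivity I would first isolate the elementary fact that $N$ is closed ``upward along $\rdiv$'': if $q\in N$ and $p\rdiv q\in N$, then $p\in N$. This holds because $p\sqcap q=(p\rdiv q)q\in N$ (as $N$ is a subnarhoop, hence closed under $\cdot$), while $p\sqcap q\le p$ always, so up-closure yields $p\in N$. Now, given $x\preceq_N y$ and $y\preceq_N z$, i.e.\ $y\rdiv x,\,z\rdiv y\in N$, I would apply part (2) to $y\preceq_N z$ with denominator $x$ to get $(z\rdiv x)\rdiv(y\rdiv x)\in N$; taking $p=z\rdiv x$ and $q=y\rdiv x$ in the fact just proved (with $q=y\rdiv x\in N$) gives $z\rdiv x\in N$, i.e.\ $x\preceq_N z$.

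Reflexivity --- showing $x\rdiv x\in N$ for every $x$ --- is the step I expect to be the main obstacle, since in a non-unital narhoop the local units $x\rdiv x$ genuinely vary and need not be comparable, so up-closure cannot be applied to them naively. My plan is to put $R=\{x\in A\mid x\rdiv x\in N\}$, note that $a\in R$ for any $a\in N$ (because $a\rdiv a\in N$), and prove $R=A$ by exploiting invariance \emph{at units}. Evaluating the generators at units, and using $(w\rdiv w)w=w\sqcap w=w$, gives the closure properties $\phi_{3,s,t}(t\rdiv t)=(st)\rdiv(st)$ and $\phi_{2,s,t}(s\rdiv s)=(s\rdiv t)\rdiv(s\rdiv t)$, so that $t\in R\implies st\in R$ and $s\in R\implies s\rdiv t\in R$ for all $s,t\in A$. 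The key trick to reach an arbitrary $c\in A$ is then (N5): since $a\in R$ gives $ca\in R$ and hence $(ca)\rdiv a\in R$, and since $c\sqcap((ca)\rdiv a)=c$ by (N5), the element $c=(c\rdiv((ca)\rdiv a))((ca)\rdiv a)$ is a product whose right factor lies in $R$; the multiplication-closure of $R$ then forces $c\in R$. I expect the only real care needed to be the bookkeeping of the parameter roles in the six generators, and confirming that the sole narhoop identity required beyond invariance and up-closure is (N5).
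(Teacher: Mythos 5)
Your proof is correct, and it diverges from the paper's in an interesting way. Parts (1), (2), and transitivity coincide with the paper's own argument: you apply $\phi_1$ and $\phi_2$ to the hypothesis element, use $y\sqcap x\leq y$ together with order-preservation of $\cdot$ and $\rdiv$ in the first argument plus up-closure of $N$, and your auxiliary fact (if $q\in N$ and $p\rdiv q\in N$ then $p\in N$) is exactly the paper's transitivity computation ($p\sqcap q=(p\rdiv q)q\in N$ by subnarhoop closure, then up-closure). You also consistently use the stated convention $x\preceq_N y\iff y\rdiv x\in N$, whereas the paper's proof silently works with the transposed reading $x\rdiv y\in N$; this is harmless since the two are interchangeable under renaming of variables, but your version is the one literally matching the statement. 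The genuine divergence is reflexivity, which you correctly identify as the main obstacle, and your route works: the paper fixes $x\in N$ and makes the one-shot, rather unmotivated choice $t=(y\rdiv y)\cdot xy$, $u=y$, $v=y\rdiv y$, $w=y$, forms the product $\phi_{4,t,u}(x)\cdot\phi_{3,v,w}(x)\in N$, identifies it as $(y\rdiv y)\sqcap[((y\rdiv y)\cdot xy)\rdiv y]$ using Lemma~\ref{Lem:preunital}(2), and concludes $y\rdiv y\in N$ by up-closure. You instead introduce $R=\{x\in A\mid x\rdiv x\in N\}$, observe $N\subseteq R$ (closure of $N$ under $\rdiv$), prove the absorption properties $t\in R\Rightarrow st\in R$ and $s\in R\Rightarrow s\rdiv t\in R$ from the correct evaluations $\phi_{3,s,t}(t\rdiv t)=st\rdiv st$ and $\phi_{2,s,t}(s\rdiv s)=(s\rdiv t)\rdiv(s\rdiv t)$ (both use only $w\sqcap w=w$), and then capture an arbitrary $c$ via (N5): $c=(c\rdiv(ca\rdiv a))(ca\rdiv a)$ with $ca\rdiv a\in R$, so $c\in R$ by absorption. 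What your version buys is modularity and motivation: it needs neither up-closure nor Lemma~\ref{Lem:preunital}(2) at this step, only invariance, subnarhoop closure, idempotence of $\sqcap$, and (N5), and the parameter choices are forced rather than conjured; the paper's version buys brevity, reaching $y\rdiv y\in N$ in a single computation. Both exploit the same underlying mechanism, namely evaluating generators of $\Inn(A)$ at local units $w\rdiv w$.
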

\begin{proof}
(1), (2): Suppose $x\preceq_N y$, that is, $x\rdiv y\in N$. Then for all $z\in A$,
$\phi_{1,y,z}(x\rdiv y)\in N$ and $\phi_{2,y,z}(x\rdiv y)\in N$. We have
\[
\phi_{1,y,z}(x\rdiv y) = (x\sqcap y)z\rdiv yz \quad\text{and}\quad
\phi_{2,y,z}(x\rdiv y) = ((x\sqcap y)\rdiv z)\rdiv (y\rdiv z)\,.
\]
Now $x\sqcap y\leq x$ implies both $(x\sqcap y)z\rdiv yz\leq xz\rdiv yz$ and
$((x\sqcap y)\rdiv z)\rdiv (y\rdiv z)\leq (x\rdiv z)\rdiv (y\rdiv z)$ using
order-preservation of $\cdot$ and $/$ in the first argument. Thus $xz\rdiv yz\in N$ and $(x\rdiv z)\rdiv (y\rdiv z)\in N$. Therefore $xz\preceq_N yz$ and $x\rdiv z\preceq_N y\rdiv z$, which establishes both (1) and (2).

(3) Next we show $\preceq_N$ is transitive. Suppose $x\preceq_N y$ and $y\preceq_N z$, that is, $x\rdiv y, y\rdiv z\in N$. By part (2), $(x\rdiv z)\rdiv (y\rdiv z)\in N$. Since $N$ is a subnarhoop, $(x\rdiv z)\sqcap (y\rdiv z) = [(x\rdiv z)\rdiv (y\rdiv z)](y\rdiv z) \in N$. Finally, $(x\rdiv z)\sqcap (y\rdiv z)\leq x\rdiv z$ and so $x\rdiv z\in N$, that is, $x\preceq_N z$.

Finally we show $\preceq_N$ is reflexive, that is, $y\rdiv y\in N$ for all $y\in A$.
Since $N\neq \emptyset$, fix $x\in N$. For all $t,u,v,w\in A$,
$\phi_{4,t,u}(x)\cdot \phi_{3,v,w}(x)\in N$ since $N$ is invariant under $\Inn(Q)$
and a subnarhoop. For fixed $y\in A$, set
\[
t = (y\rdiv y)\cdot xy\,,\quad u = y\,,\quad v = y\rdiv y\,,\quad w = y\,.
\]
Then
\[
\phi_{4,t,u}(x) = [((y\rdiv y)\cdot xy)\rdiv xy]\rdiv [((y\rdiv y)\cdot xy)\rdiv y]
= (y\rdiv y)\rdiv [((y\rdiv y)\cdot xy)\rdiv y]\,,
\]
using Lemma~\ref{Lem:preunital}(2), and
\[
\phi_{3,v,w} = ((y\rdiv y)\cdot xy)\rdiv (y\rdiv y)y = ((y\rdiv y)\cdot xy)\rdiv y\,.
\]
Thus
\begin{align*}
\phi_{4,t,u}(x)\cdot \phi_{3,v,w}(x) &=
((y\rdiv y)\rdiv [((y\rdiv y)\cdot xy)\rdiv y])\cdot [((y\rdiv y)\cdot xy)\rdiv y] \\
&= (y\rdiv y)\sqcap [((y\rdiv y)\cdot xy)\rdiv y] \\
&\leq y\rdiv y\,.
\end{align*}
It follows that $y\rdiv y\in N$ for all $y\in A$, as claimed. This completes the proof.
\end{proof}

The next result is new even in the less general setting of right quasigroups with left identity elements.

\begin{thm}\label{Thm:normal}
Let $(A,\cdot,\rdiv)$ be a narhoop and assume $N\unlhd A$ is nonempty.
Define $\theta_N$ on $A$ by $x\reltheta_N y$ if and only if $x\rdiv y,y\rdiv x\in N$. Then $\theta_N$ is a unital congruence and $N_{\theta_N} = N$.
\end{thm}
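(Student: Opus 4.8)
The plan is to check the four things the statement bundles together: that $\theta_N$ is an equivalence relation, that it is compatible with $\cdot$ and $\rdiv$, that the quotient is unital, and that $N_{\theta_N}=N$. The first is essentially free. Since $a\preceq_N b$ means $b\rdiv a\in N$, the definition of $\theta_N$ says precisely that $x\reltheta_N y$ holds iff $x\preceq_N y$ and $y\preceq_N x$; that is, $\theta_N$ is the symmetrization of $\preceq_N$. As $\preceq_N$ is a preorder by Lemma~\ref{Lem:N_preorder}(3), its symmetrization is reflexive, symmetric, and transitive. Compatibility in the \emph{left} argument is also immediate: by Lemma~\ref{Lem:N_preorder}(1),(2), $x\preceq_N x'$ implies $xz\preceq_N x'z$ and $x\rdiv z\preceq_N x'\rdiv z$, and symmetrizing gives $x\reltheta_N x'\implies xz\reltheta_N x'z$ and $x\rdiv z\reltheta_N x'\rdiv z$.

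Before attacking right compatibility I would isolate one technical fact: $N$ is closed upward along $\preceq_N$, i.e.\ if $a\in N$ and $a\preceq_N b$ then $b\in N$. Indeed $b\sqcap a=(b\rdiv a)a$ is a product of two elements of $N$ (here $b\rdiv a\in N$ because $a\preceq_N b$), so $b\sqcap a\in N$ since $N$ is a subnarhoop; and $b\sqcap a\leq b$, so condition (2) in the definition of $N\unlhd A$ forces $b\in N$. The point of doing it this way is that this argument uses only closure of $N$ under $\cdot$ together with upward closure, and never any monotonicity of $\rdiv$ in its \emph{denominator}, which in a right-residuated magma need not hold.

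Right compatibility is the crux, and the idea is to route the comparison of $xy$ with $xy'$ (and of $x\rdiv y$ with $x\rdiv y'$) through the single element $y\sqcap y'$. Given $y\reltheta_N y'$, note $(y\rdiv y')y'=y\sqcap y'$, and, putting $m=(y\sqcap y')\rdiv y$, that $my=(y\sqcap y')\sqcap y=y\sqcap y'$ by (N1). Both $y\rdiv y'$ and $m$ lie in $N$: the former because $y'\preceq_N y$, and the latter because the invariance of $N$ applied to $\phi_2$ gives $\phi_{2,y',y}(y\rdiv y')=((y\sqcap y')\rdiv y)\rdiv(y'\rdiv y)\in N$, which says $(y'\rdiv y)\preceq_N m$, so $m\in N$ by the upward-closure fact above (since $y'\rdiv y\in N$). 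Now invariance of $N$ applied to $\phi_3$ and $\phi_5$ with base $y'$ and element $y\rdiv y'$ yields $x(y\sqcap y')\rdiv xy'\in N$ and $xy'\rdiv x(y\sqcap y')\in N$, i.e.\ $x(y\sqcap y')\reltheta_N xy'$; with base $y$ and element $m$ it yields $x(y\sqcap y')\reltheta_N xy$. Transitivity gives $xy\reltheta_N xy'$, and the identical computation with $\phi_4,\phi_6$ replacing $\phi_3,\phi_5$ gives $x\rdiv y\reltheta_N x\rdiv y'$. Composing these with left compatibility and transitivity shows $\theta_N$ is a congruence. This step is where I expect the real work to be, and where the otherwise mysterious maps $\phi_3,\dots,\phi_6$ (none of which was needed in Lemma~\ref{Lem:N_preorder}) earn their keep: they are exactly the devices that convert the order relation $y\sqcap y'\le y$ into a $\theta_N$-relation between $xy$ and $x(y\sqcap y')$ despite the failure of second-argument monotonicity.

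Finally I would show $N$ is a single $\theta_N$-class and read off both remaining claims. Any $a,b\in N$ satisfy $a\rdiv b,\,b\rdiv a\in N$ because $N$ is closed under $\rdiv$, so $a\reltheta_N b$; and if $z\reltheta_N a$ with $a\in N$, then $z\sqcap a=(z\rdiv a)a\in N$ (both factors in $N$) and $z\sqcap a\leq z$, whence $z\in N$ by upward closure. Thus $N$ coincides with the $\theta_N$-class of any of its members. Since every $y\rdiv y\in N$ (reflexivity of $\preceq_N$), this class is by definition $N_{\theta_N}$, so $N_{\theta_N}=N$; and because all the elements $x\rdiv x$ lie in this one class, $x\rdiv x\reltheta_N y\rdiv y$ for all $x,y$, i.e.\ $\theta_N$ is unital.
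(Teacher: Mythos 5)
Your proposal is correct, and its skeleton matches the paper's: Lemma~\ref{Lem:N_preorder} supplies the equivalence relation and first-argument compatibility, invariance of $N$ under the $\phi_i$ together with subnarhoop closure and upward $\leq$-closure supplies second-argument compatibility, and $N$ is then identified as the $\theta_N$-class of the elements $y\rdiv y$. The genuine difference lies in how the crux --- second-argument compatibility --- is assembled. Both arguments first extract $m=(y\sqcap y')\rdiv y\in N$ via $\phi_2$ and the product-then-upward-closure trick (this is \eqref{Eq:congtmp0} in your notation). The paper then derives only two one-sided memberships, $xy\rdiv x(y\sqcap y')\in N$ (from $\phi_5$ with base $y$ and element $m$, using (N1)) and $x(y\sqcap y')\rdiv xy'\in N$ (from $\phi_3$ with base $y'$ and element $y\rdiv y'$), and splices them together by re-invoking Lemma~\ref{Lem:N_preorder}(2) and upward closure to conclude $xy\rdiv xy'\in N$; the reverse membership is then obtained by swapping the roles of $y$ and $y'$. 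You instead run both $(\phi_3,\phi_5)$ pairs to completion, obtaining the two-sided relations $xy\reltheta_N x(y\sqcap y')$ and $x(y\sqcap y')\reltheta_N xy'$ outright, and finish by transitivity, with the $(\phi_4,\phi_6)$ pairs handling $\rdiv$ verbatim. What your route buys: both directions of the relation emerge symmetrically, so no role-swapping step is needed, and Lemma~\ref{Lem:N_preorder}(1),(2) never reappears inside the congruence verification; the cost, an extra application of $\phi_3$ and $\phi_5$ (resp.\ $\phi_4$ and $\phi_6$), is negligible. Two further virtues of your write-up: you isolate as a reusable fact the upward-closure principle ($a\in N$ and $a\preceq_N b$ imply $b\in N$) that the paper re-derives inline three separate times, and you make unitality of $\theta_N$ explicit (all elements $x\rdiv x$ lie in the single class $N$), where the paper leaves it implicit in its final paragraph.
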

\begin{proof}
We have $x\reltheta_N y$ if and only if $x\preceq_N y$ and $y\preceq_N x$, and so by
Lemma \ref{Lem:N_preorder}, $\theta_N$ is the equivalence relation induced by the preorder $\preceq_N$. To prove that $\theta_N$ is a congruence, it
remains to establish
\begin{itemize}
  \item[(a)]\quad $x\reltheta_N y$ implies $zx\reltheta_N zy$, and
  \item[(b)]\quad $x\reltheta_N y$ implies $z\rdiv x\reltheta_N z\rdiv y$
\end{itemize}
for all $x,y,z\in A$.

In preparation for both parts, suppose $x\reltheta_N y$, that is, $x\rdiv y, y\rdiv x\in N$. We have
\begin{alignat}{2}
  \phi_{2,y,z}(x\rdiv y) &= ((x\sqcap y)\rdiv z)\rdiv (y\rdiv z) &\ \in N\,, \label{Eq:phi2_x/y}\\
  \phi_{3,z,y}(x\rdiv y) &= z(x\sqcap y)\rdiv zy                 &\ \in N\,, \label{Eq:phi3_x/y}\\
  \phi_{4,z,y}(x\rdiv y) &= (z\rdiv (x\sqcap y))\rdiv (z\rdiv y) &\ \in N\,. \label{Eq:phi4_x/y}
\end{alignat}

In \eqref{Eq:phi2_x/y}, set $z = x$. Since $y\rdiv x\in N$ and $N$ is a subnarhoop,
we obtain
\[
[((x\sqcap y)\rdiv x)\rdiv (y\rdiv x)](y\rdiv x) = ((x\sqcap y)\rdiv x)\sqcap (y\rdiv x)\in N\,.
\]
Because $((x\sqcap y)\rdiv x)\sqcap (y\rdiv x) \leq (x\sqcap y)\rdiv x$, we have
\begin{equation}\label{Eq:congtmp0}
  (x\sqcap y)\rdiv x\in N\,.
\end{equation}

From \eqref{Eq:congtmp0}, we have
\begin{alignat}{4}
  \phi_{5,z,x}((x\sqcap y)\rdiv x)   &= zx\rdiv z[(x\sqcap y)\sqcap x]
                                    &&= zx\rdiv z(x\sqcap y)
                                    &\ \in N\,, \label{Eq:phi5_xy/x} \\
  \phi_{6,z,x}((x\sqcap y)\rdiv x)   &= (z\rdiv x)\rdiv (z\rdiv [(x\sqcap y)\sqcap x])
                                    &&= (z\rdiv x)\rdiv (z\rdiv (x\sqcap y))
                                    &\ \in N\,, \label{Eq:phi6_xy/x}
\end{alignat}
where we have used (N1) in both calculations.

Applying Lemma \ref{Lem:N_preorder} to \eqref{Eq:phi5_xy/x}, we have
\begin{equation}\label{Eq:congtmp2}
(zx\rdiv zy)\rdiv (z(x\sqcap y)\rdiv zy)\in N\,.
\end{equation}
Combining this with \eqref{Eq:phi3_x/y} and the fact that $N$ is a subnarhoop, we get
\[
[(zx\rdiv zy)\rdiv (z(x\sqcap y)\rdiv zy)](z(x\sqcap y)\rdiv zy) =
(zx\rdiv zy)\sqcap (z(x\sqcap y)\rdiv zy)\in N\,.
\]
Since $(zx\rdiv zy)\sqcap (z(x\sqcap y)\rdiv zy)\leq zx\rdiv zy$, we have $zx\rdiv zy\in N$, that is, $zy\preceq_N zx$. Reversing the roles of $x$ and $y$ (which makes sense because we assumed both $x\rdiv y,y\rdiv x\in N$), we also have $zx\preceq_N zy$.
Therefore $zx\reltheta_N zy$, which establishes (a).

Applying Lemma \ref{Lem:N_preorder} to \eqref{Eq:phi6_xy/x}, we have
\begin{equation}\label{Eq:congtmp1}
[(z\rdiv x)\rdiv (z\rdiv y)]\rdiv [(z\rdiv (x\sqcap y))\rdiv (z\rdiv y)] \in N\,.
\end{equation}
Combining this with \eqref{Eq:phi4_x/y} and the fact that $N$ is a subnarhoop, we get
\begin{multline*}
  ([(z\rdiv x)\rdiv (z\rdiv y)]\rdiv [(z\rdiv (x\sqcap y))\rdiv (z\rdiv y)])[(z\rdiv (x\sqcap y))\rdiv (z\rdiv y)] \\
  = [(z\rdiv x)\rdiv (z\rdiv y)]\sqcap [(z\rdiv (x\sqcap y))\rdiv (z\rdiv y)]\in N\,.
\end{multline*}
Since $[(z\rdiv x)\rdiv (z\rdiv y)]\sqcap [(z\rdiv (x\sqcap y))\rdiv (z\rdiv y)]
\leq (z\rdiv x)\rdiv (z\rdiv y)$, we have $(z\rdiv x)\rdiv (z\rdiv y)\in N$, that is,
$z\rdiv y\preceq_N z\rdiv x$. Reversing the roles of $x$ and $y$, we also have
$z\rdiv x\preceq_N z\rdiv y$. Therefore $z\rdiv x\reltheta z\rdiv y$, which
establishes (b)

\smallskip

We turn to the final assertion of the theorem.
If $x\in N$, then since each $y\rdiv y\in N$ (Lemma \ref{Lem:N_preorder}) and $N$ is a
subnarhoop, we have $x\rdiv (y\rdiv y), (y\rdiv y)\rdiv x\in N$. Thus $x\reltheta_N y\rdiv y$, that is, $x\in N_{\theta_N}$. Therefore $N\subseteq N_{\theta_N}$. Conversely, if $x\in N_{\theta_N}$, then $x\rdiv (y\rdiv y)\in N$. Again using $y\rdiv y\in N$ and $N$ being a subnarhoop, we have $x\sqcap (y\rdiv y) = (x\rdiv (y\rdiv y))(y\rdiv y)\in N$. This implies $x\in N$ because $x\sqcap (y\rdiv y)\leq x$. Therefore $N_{\theta_N}\subseteq N$. This completes the proof.
\end{proof}

\section*{Acknowledgments}

This research was supported by the automated theorem prover \textsc{Prover9} and the finite model builder \textsc{Mace4}, both created by McCune \cite{McCune}. We would like to thank Bob Veroff for hosting the 2016 Workshop on Automated Deduction and its Applications to Mathematics (ADAM) which is where our collaboration began.

\end{document}